\DeclareMathAlphabet{\mathpzc}{OT1}{pzc}{m}{it}
\newtheorem{theorem}{Theorem}[section]
\newtheorem{corollary}{Corollary}
\newtheorem{lemma}[theorem]{Lemma}
\newtheorem{proposition}{Proposition}
\theoremstyle{definition}
\newtheorem{definition}[theorem]{Definition}
\newtheorem{remark}{Remark}
\newtheorem*{notation}{Notation}
\newcommand{\ep}{\varepsilon}
\newcommand{\vecf}{\mbox{\boldmath $ f $}}
\newcommand{\vecg}{\mbox{\boldmath $ g $}}
\newcommand{\vecp}{\mbox{\boldmath $ p $}}
\newcommand{\vecx}{\mbox{\boldmath $ x $}}
\newcommand{\svecx}{\mbox{\scriptsize \boldmath $ x $}}
\newcommand{\vecv}{\mbox{\boldmath $ v $}}
\newcommand{\vecphi}{\mbox{\boldmath $ \phi $}}
\newcommand{\vecpsi}{\mbox{\boldmath $ \psi $}}
\title{Recent topics on the O'Hara energies}
\author{Shoya Kawakami}
\date{\today}
\begin{document}

\maketitle

%
%

\begin{abstract}
The O'Hara energies,
introduced by Jun O'Hara in 1991,
were proposed to answer the question of what is a ``good" figure in a given knot type.
A property of the O'Hara energies is that the ``better'' the figure of a knot is, the less the energy value is.
In this article, we discuss two topics on the O'Hara energies.
First,
we slightly generalize the O'Hara energies and consider a characterization of its finiteness.
The finiteness of the O'Hara energies was considered by Blatt in 2012 who used the Sobolev-Slobodeckii space,
and naturally we consider a generalization of this space.
Another fundamental problem is to understand the minimizers of the O'Hara energies.
This problem has been addressed in several papers,
some of them based on numerical computations.
In this direction,
we discuss a discretization of the O'Hara energies and give some examples of numerical computations.
Particular one of the O'Hara energies,
called the M\"{o}bius energy thanks to its M\"{o}bius invariance,
was considered by Kim-Kusner in 1993,
and Scholtes in 2014 established convergence properties.
We apply their argument in general since the argument does not rely on M\"{o}bius invariance.
\end{abstract}

%
%

\section{Introduction}

The family of \textit{O'Hara energies} were introduced by O'Hara \cite{O91,O92} and are defined as
\[
	\mathcal{E}^{\alpha , p}(\vecf)
	:=
	\iint_{(\mathbb{R} / \mathcal{L}\mathbb{Z})^2}
	\left(
	\frac 1 { \| \vecf(s_2) - \vecf(s_1) \|_{\mathbb{R}^d}^\alpha}
	-
	\frac 1 { \mathscr{D} (\vecf(s_1) , \vecf(s_2))^\alpha}
	\right)^p
	ds_2ds_1,
\]
where
$\alpha$,
$p \in (0,\infty)$
are constants,
$\vecf : \mathbb{R} / \mathcal{L}\mathbb{Z} \to \mathbb{R}^d$
is a curve embedded in
$\mathbb{R}^d$
parametrized by arc-length with total length
$\mathcal{L}$,
and
$\mathscr{D}(\vecf(s_1) , \vecf(s_2))$
is the intrinsic distance between
$\vecf(s_1)$
and
$\vecf(s_2)$.
The purpose of these energies is to give an answer to the question:
``What is the most beautiful knot in a given knot class ?"
Therefore,
the O'Hara energies were constructed so that the more a knot is well-balanced,
the less the energy is.
Also,
when we deform a knot,
it is not desirable that the knot class to which the knot belongs changes.
Thus,
these energies were also constructed so that the energy value diverges if the curve has self-intersection.
A study of minimizers of the O'Hara energies under length constraint were carried out in
\cite{ACFGH03,FHW94,O94}.
In particular,
right circles attain the minimum of these energies for
$\alpha \in (0,\infty)$
and
$p \in [1,\infty)$
with
$0 < \alpha < 2+1/p$.
Indeed,
this result was shown by Adams et al.\ in
\cite{ACFGH03}
for the more general energy
\[
	\mathcal{E}^F (\vecf)
	:=
	\iint_{ (\mathbb{R} / \mathcal{L}\mathbb{Z})^2 }
	F( \| \vecf(s_2) - \vecf(s_1) \|_{\mathbb{R}^d} , \mathscr{D}(\vecf(s_1) , \vecf(s_2) )
	ds_2ds_1,
\]
where
$F = F(x,y)$
is increasing and convex for
$x \in (0,y]$
and
$y \in (0, \mathcal{L}/2)$.
For example,
$F(x,y) = ( x^{-\alpha} - y^{-\alpha} )^p$
satisfies this assumption when
$p \in [1,\infty)$
and
$0 < \alpha < 2+1/p$.

The purpose of this article is two-fold.
Firstly,
we study a slightly generalized energy
\[
	\mathcal{E}^{\Phi , p} (\vecf)
	:=
	\iint_{ (\mathbb{R} / \mathcal{L}\mathbb{Z})^2 }
	\left(
	\frac 1 { \Phi(\| \vecf(s_2) - \vecf(s_1) \|_{\mathbb{R}^d}) }
	-
	\frac 1 { \Phi(\mathscr{D}(\vecf(s_1) , \vecf(s_2))) }
	\right)^p
	ds_2ds_1
\]
under suitable assumptions on
$\Phi$,
and we should see that such a generalization brings out certain properties of
$\mathcal{E}^{\alpha , p}$
in a clearer manner.
It is known that the finiteness of
$\mathcal{E}^{\alpha , p}(\vecf)$
implies bi-Lipschitz continuity and some regularity of
$\vecf$,
see
\cite{B12}.
We generalize this fact to the case
$\mathcal{E}^{\Phi , p}$,
and we clarify what properties of
$\Phi$
give rise to these properties of
$\vecf$.
In particular,
we define a function space
$W^{k+\Phi , p}$
which is a generalization of the Sobolev-Slobodeckii space,
and discuss the relation between our new space and the domain of
$\mathcal{E}^{\Phi , p}$.

The second aim is to study energies for polygonal knots,
that is,
discretization of the original energy.
More precisely,
a discrete version of
$\mathcal{E}^{\alpha , p}$
is proposed together with some numerical results. 
Several discrete versions of the energy
$\mathcal{E}^{2,1}$,
called the \textit{M\"{o}bius energy},
have been introduced earlier;
one is by Kim-Kusner
\cite{KK93},
and another is by Simon
\cite{Si94}.
Their convergence was shown by Scholtes
\cite{S14}
and Rawdon-Simon
\cite{RS06}
respectively.
Although
$\mathcal{E}^{2,1}$
is invariant under M\"{o}bius transformations
(cf.\ \cite{FHW94}),
the proof of the result of
\cite{S14}
did not use the M\"{o}bius invariance.
Here,
we extend the results by Kim-Kusner
\cite{KK93}
and Scholtes
\cite{S14}
to the case
$\mathcal{E}^{\alpha ,p}$,
and improve the rate of convergence of
$\mathcal{E}^{2,1}$.

%
%

\section{A generalization of the O'Hara energy}\label{gene}

Although minimizers of
$\mathcal{E}^F$
were obtained in
\cite{ACFGH03},
other fundamental properties of
$\mathcal{E}^F$
have not been investigated in the existing literature.
Here,
we consider the problem of characterizing the finiteness of generalized energies.
At the level of generality of
$\mathcal{E}^F$,
this seems to be a very difficult problem so we restrict ourself to the case
$\mathcal{E}^{\Phi ,p}$
defined above,
where
$p \in [1,\infty)$
is a constant,
and
$\Phi : [0,\infty) \to [0,\infty)$
is a strictly increasing function such that
$\Phi(0) = 0$.
Note that finiteness of the O'Hara energies is discussed by Blatt in \cite{B12},
where he showed that if
$\alpha \in (0,\infty)$
and
$q \in [1,\infty)$
satisfy
$2 \leq \alpha p < 2p+1$,
then
$\mathcal{E}^{\alpha ,p}(\vecf) < \infty$
if and only if
$\vecf$
is bi-Lipschitz continuous and belongs to the Sobolev-Slobodeckii space
\begin{multline*}
	W^{1+\sigma , 2p} (\mathbb{R} / \mathcal{L}\mathbb{Z} , \mathbb{R}^d)
\\
	:=
	\left\{ \vecf \in W^{1,2p} (\mathbb{R} / \mathcal{L}\mathbb{Z} , \mathbb{R}^d)
	\,\left|\,
	\iint_{(\mathbb{R} / \mathcal{L}\mathbb{Z})^2}
	\frac{ \| \vecf^\prime(s_2)-\vecf^\prime(s_1) \|_{\mathbb{R}^d}^{2p} }{ |s_2-s_1|^{\alpha p} }
	ds_2ds_1 < \infty
	\right.\right\},
\end{multline*}
where
$\sigma = (\alpha p-1)/(2p)$.
Hence,
to establish a condition for finiteness of
$\mathcal{E}^{\Phi , p}$,
it is natural to consider a generalization of the Sobolev-Slobodeckii space.

\begin{definition}
Let
$\Omega$
be a non-empty subset of
$\mathbb{R}$.
For
$p \in [1, \infty)$,
$k \in \mathbb{N} \cup \{ 0 \}$,
and measurable function
$\Psi : [0,\infty) \to [0,\infty)$,
we define
\[
	W^{k+\Psi , p}(\Omega , \mathbb{R}^d)
	:=
	\{ \vecf \in W^{k,p}(\Omega , \mathbb{R}^d)
	\,|\,
	[\vecf^{(k)}]_{\Psi , p} < \infty \},
\]
where
\[
	[\vecf^{(k)}]_{\Psi , p}
	:=
	\left(
	\iint_{\Omega \times \Omega}	
	\frac{ \| \vecf^{(k)}(s_2) - \vecf^{(k)}(s_1) \|_{\mathbb{R}^d}^p }{ \Psi(|s_2-s_1|)^p }
	\frac 1 {|s_2-s_1|} ds_2ds_1
	\right)^{1/p}.
\]
\end{definition}
\noindent
We equip the space
$W^{k+\Psi ,p}$
with the norm
\[
	\| \vecf \|_{W^{k+\Psi , p}}
	:=
	\| \vecf \|_{W^{k ,p}} + [\vecf^{(k)}]_{\Psi , p},
\]
in which case it becomes a Banach space.
Moreover,
the dual space of
$W^{\Psi , p} (\Omega , \mathbb{R}^d)$
is characterized as the following proposition which may be proved by using the argument of
\cite[pp.\ 38--42]{M06}.

\begin{proposition}
Let
$\Omega$
be a non-empty subset of
$\mathbb{R}$,
and let
$\Psi : [0,\infty) \to [0,\infty)$
be a measurable function.
For
$p \in [1,\infty)$,
let
$q \in (1,\infty]$
satisfy
$1/p + 1/q = 1$.
Then,
for all
$T \in (W^{\Psi , p} (\Omega , \mathbb{R}^d) )^\prime$,
there exists
$(\vecphi , \vecpsi) \in L^q (\Omega , \mathbb{R}^d) \times L^q (\Omega\times\Omega , \mathbb{R}^d)$
such that
\[
	\| T \|_{(W^{\Psi , p}(\Omega))^\prime}
	=
	\max \{ \| \vecphi \|_{L^q (\Omega)} , \| \vecpsi \|_{L^q (\Omega\times\Omega)} \}
\]
and
\[
	T(\vecf)
	=
	\int_\Omega \vecf(s) \cdot \vecphi(s) ds
	+
	\iint_{\Omega \times \Omega}
	\left(
	\frac{ \vecf(s_2) - \vecf(s_1) }{ \Psi(|s_2-s_1|) } \cdot \vecpsi(s_1 , s_2)
	\right)
	\frac 1 { |s_2-s_1|^{1/p} }
	ds_2ds_1
\]
for any
$\vecf \in W^{\Psi , p} (\Omega , \mathbb{R}^d)$.
In particular,
if
$1 < p < \infty$,
then
$W^{\Psi , p} (\Omega , \mathbb{R}^d)$
is reflexive.
\end{proposition}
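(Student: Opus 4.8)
The plan is to realize $(W^{\Psi , p})'$ by a Hahn--Banach extension argument after embedding $W^{\Psi , p}$ isometrically into a product of Lebesgue spaces, which is essentially the strategy of \cite[pp.\ 38--42]{M06}. First I would introduce the product Banach space $X := L^q(\Omega , \mathbb{R}^d) \times L^q(\Omega \times \Omega , \mathbb{R}^d)$'s predual $L^p(\Omega , \mathbb{R}^d) \times L^p(\Omega \times \Omega , \mathbb{R}^d)$, call it $Y$, equipped with the sum norm $\| (u,v) \|_Y := \| u \|_{L^p(\Omega)} + \| v \|_{L^p(\Omega\times\Omega)}$, and define the linear map $J \colon W^{\Psi , p} \to Y$ by
\[
	J\vecf := \left( \vecf ,\ \frac{ \vecf(s_2) - \vecf(s_1) }{ \Psi(|s_2-s_1|)\, |s_2-s_1|^{1/p} } \right).
\]
By the very definition of the seminorm $[\,\cdot\,]_{\Psi , p}$, the second component lies in $L^p(\Omega\times\Omega , \mathbb{R}^d)$ with norm $[\vecf]_{\Psi , p}$, so that $\| J\vecf \|_Y = \| \vecf \|_{L^p} + [\vecf]_{\Psi , p} = \| \vecf \|_{W^{\Psi , p}}$; thus $J$ is an isometry. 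Since $W^{\Psi , p}$ is a Banach space, $J(W^{\Psi , p})$ is complete, hence a closed subspace of $Y$.

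Next, given $T \in (W^{\Psi , p})'$, I would transport it to the functional $\widetilde T := T \circ J^{-1}$ on the closed subspace $J(W^{\Psi , p})$, which has the same norm as $T$ because $J$ is an isometric isomorphism onto its image. By the Hahn--Banach theorem there is a norm-preserving extension $S \in Y'$ with $\| S \|_{Y'} = \| \widetilde T \| = \| T \|$. The remaining algebraic input is the identification of $Y'$: because $Y$ carries the sum ($\ell^1$-type) norm and each factor is an $L^p$ space with $1 \le p < \infty$, its dual is $L^q(\Omega , \mathbb{R}^d) \times L^q(\Omega\times\Omega , \mathbb{R}^d)$ equipped with the max ($\ell^\infty$-type) norm. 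This produces $(\vecphi , \vecpsi)$ with $S(u,v) = \int_\Omega u \cdot \vecphi + \iint_{\Omega\times\Omega} v \cdot \vecpsi$ and $\| S \|_{Y'} = \max\{ \| \vecphi \|_{L^q} , \| \vecpsi \|_{L^q} \}$. Evaluating $S$ on $J\vecf$ and pulling the factor $|s_2-s_1|^{-1/p}$ out of the second integral reproduces exactly the claimed representation of $T(\vecf)$, while chaining the norm equalities $\| T \| = \| S \|_{Y'} = \max\{ \| \vecphi \|_{L^q} , \| \vecpsi \|_{L^q} \}$ gives the asserted norm identity.

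For the reflexivity statement I would argue that, when $1 < p < \infty$, each $L^p$ factor is reflexive, hence $Y$ is reflexive as a finite product of reflexive spaces, and a closed subspace of a reflexive space is reflexive; since $J$ identifies $W^{\Psi , p}$ isometrically with the closed subspace $J(W^{\Psi , p})$, the space $W^{\Psi , p}$ is itself reflexive.

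The argument is essentially formal once the embedding is in place, and the only genuinely delicate points are the following. First, one must verify the duality $(L^p \times L^p)' = L^q \times L^q$ with the correct pairing of sum- and max-norms, for which the standard $L^p$-duality for $1 \le p < \infty$ and a short supremum computation with the definition of the operator norm suffice. Second, one must check the measure-theoretic well-definedness of the second component of $J\vecf$, since $\Psi$ is merely measurable and may vanish: on the set where $\Psi(|s_2-s_1|) = 0$, membership in $W^{\Psi , p}$ forces the numerator $\vecf(s_2)-\vecf(s_1)$ to vanish as well, so the quotient may be set to zero there without affecting $L^p$-membership. I expect the main obstacle to be purely organizational --- keeping the sum/max duality and the isometry bookkeeping consistent --- rather than any hard analytic estimate; note also that the non-uniqueness of $(\vecphi , \vecpsi)$, inherited from the non-uniqueness in Hahn--Banach, is fully consistent with the proposition, which asserts only existence.
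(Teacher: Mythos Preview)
Your proposal is correct and is precisely the standard argument the paper has in mind: the paper does not write out a proof but merely states that the proposition ``may be proved by using the argument of \cite[pp.\ 38--42]{M06}'', which is exactly the isometric embedding into $L^p(\Omega)\times L^p(\Omega\times\Omega)$, Hahn--Banach extension, and identification of the dual with the max-norm product $L^q\times L^q$ that you carry out. Your handling of the bookkeeping (sum/max norm duality, closedness of the image, the reflexivity via closed subspaces of reflexive spaces) and of the measurability issue when $\Psi$ vanishes is accurate.
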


In
\cite{B12},
it was shown that
$\vecf$
is \textit{bi-Lipschitz continuous} for all embedded regular curves
$
	\vecf \in C^{0,1} (\mathbb{R} / \mathcal{L}\mathbb{Z} , \mathbb{R}^d)
	\cap
	W^{1+\Psi , 2p} (\mathbb{R} / \mathcal{L}\mathbb{Z} , \mathbb{R}^d)
$,
which suggests that
$\vecf$
does not bend sharply.
It is natural to expect that all embedded regular curves belonging to the generalized Sobolev space are bi-Lipschitz;
we confirm this expectation with the following theorem which we establish by modifying the argument of Blatt
\cite{B12}.

\begin{theorem}[The bi-Lipschitz continuity]\label{thm:bi}
Let an increasing function
$\Phi : [0,\infty) \to [0,\infty)$
satisfy
$\Phi(0) = 0$
and
$\Phi(x) = O(x^{2/p})$
as
$x \to +0$
for
$p \in [1,\infty)$.
Set
$\Psi(x) := ( x^{-1/p} \Phi(x) )^{1/2}$.
Assume that
$\vecf$
belongs to
$
	C^{0,1} (\mathbb{R} / \mathcal{L}\mathbb{Z} , \mathbb{R}^d)
	\cap
	W^{1+\Psi , 2p} (\mathbb{R} / \mathcal{L}\mathbb{Z} , \mathbb{R}^d)
$
whose image is a closed embedded curve in
$\mathbb{R}^d$
parametrized by arc-length.
Then,
$\vecf$
is bi-Lipschitz continuous.
\end{theorem}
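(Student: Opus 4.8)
The plan is to split the bi-Lipschitz estimate into a \emph{local} part (parameters that are close) and a \emph{global} part (parameters that are far apart), the local part being the heart of the matter. The upper bound is immediate from the arc-length parametrization: along the shorter arc,
\[
	\| \vecf(s_2) - \vecf(s_1) \|_{\mathbb{R}^d}
	=
	\Bigl\| \int_{s_1}^{s_2} \vecf^\prime \Bigr\|_{\mathbb{R}^d}
	\le
	\int_{s_1}^{s_2} \| \vecf^\prime \|_{\mathbb{R}^d}
	=
	\mathscr{D}(\vecf(s_1) , \vecf(s_2)),
\]
so only a lower bound $\| \vecf(s_2) - \vecf(s_1) \|_{\mathbb{R}^d} \ge c\, \mathscr{D}(\vecf(s_1) , \vecf(s_2))$ remains.

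For the local part I would start from the exact identity obtained by Fubini and $\| \vecf^\prime \|_{\mathbb{R}^d} = 1$ a.e.: with $d := s_2 - s_1$ and $\vecf^\prime(\tau) \cdot \vecf^\prime(\sigma) = 1 - \tfrac12 \| \vecf^\prime(\tau) - \vecf^\prime(\sigma) \|_{\mathbb{R}^d}^2$,
\[
	\| \vecf(s_2) - \vecf(s_1) \|_{\mathbb{R}^d}^2
	=
	d^2
	-
	\frac 1 2 \iint_{[s_1,s_2]^2}
	\| \vecf^\prime(\tau) - \vecf^\prime(\sigma) \|_{\mathbb{R}^d}^2
	\, d\tau\, d\sigma .
\]
Hence it suffices to show that the averaged tangent oscillation $A(s_1,d) := d^{-2} \iint_{[s_1,s_2]^2} \| \vecf^\prime(\tau) - \vecf^\prime(\sigma) \|_{\mathbb{R}^d}^2$ can be made at most $1$ for all $d \le \delta$, \emph{uniformly} in $s_1$; then $\| \vecf(s_2) - \vecf(s_1) \|_{\mathbb{R}^d}^2 \ge d^2/2$ on that scale.

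To bound $A(s_1,d)$ I would pass to the energy seminorm. Since $[s_1,s_2]^2$ has area $d^2$ and $t \mapsto t^p$ is convex for $p \ge 1$, Jensen's inequality gives $A(s_1,d)^p \le d^{-2} \iint_{[s_1,s_2]^2} \| \vecf^\prime(\tau) - \vecf^\prime(\sigma) \|_{\mathbb{R}^d}^{2p}$, and using that $\Phi$ is increasing, so $\Phi(|\tau-\sigma|)^p \le \Phi(d)^p$ on the square,
\[
	A(s_1,d)^p
	\le
	\frac{ \Phi(d)^p }{ d^2 }
	\iint_{[s_1,s_2]^2}
	\frac{ \| \vecf^\prime(\tau) - \vecf^\prime(\sigma) \|_{\mathbb{R}^d}^{2p} }{ \Phi(|\tau-\sigma|)^p }
	\, d\tau\, d\sigma .
\]
Here the choice $\Psi(x) = ( x^{-1/p} \Phi(x) )^{1/2}$ is exactly what makes $\Psi(|s_2-s_1|)^{2p} |s_2-s_1| = \Phi(|s_2-s_1|)^p$, so the integrand above is the integrable density of $[\vecf^\prime]_{\Psi , 2p}^{2p}$, finite by hypothesis. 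Because $[s_1,s_2]^2$ lies in the diagonal strip $S_d := \{ (\tau,\sigma) \in (\mathbb{R}/\mathcal{L}\mathbb{Z})^2 : |\tau-\sigma| \le d \}$, whose measure tends to $0$, absolute continuity of the integral bounds the last integral by $\eta(d) := \iint_{S_d} \| \vecf^\prime(\tau) - \vecf^\prime(\sigma) \|_{\mathbb{R}^d}^{2p} \Phi(|\tau-\sigma|)^{-p} \, d\tau\, d\sigma \to 0$, which is independent of $s_1$. Finally $\Phi(x) = O(x^{2/p})$ forces $\Phi(d)^p/d^2$ to stay bounded, so $A(s_1,d)^p \le M\,\eta(d) \to 0$ uniformly in $s_1$, yielding the local lower bound.

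For the global part the embeddedness does the work: $\vecf$ is a continuous injection of the compact circle $\mathbb{R}/\mathcal{L}\mathbb{Z}$, hence a homeomorphism onto its image, so $(s_1,s_2) \mapsto \| \vecf(s_2) - \vecf(s_1) \|_{\mathbb{R}^d}$ is continuous and strictly positive on the compact set $\{ \mathscr{D}(\vecf(s_1),\vecf(s_2)) \ge \delta \}$, thus bounded below by some $c_\delta > 0$. Combining this with $\mathscr{D} \le \mathcal{L}/2$ and the local estimate produces one constant $c$ with $\| \vecf(s_2) - \vecf(s_1) \|_{\mathbb{R}^d} \ge c\, \mathscr{D}(\vecf(s_1),\vecf(s_2))$ for all $s_1,s_2$. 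The step I expect to be the main obstacle is securing the \emph{uniformity in $s_1$} of the smallness of $A(s_1,d)$: this is exactly where reducing the local double integral to the single integral over the shrinking strip $S_d$ is essential, since for a general $\Phi$ one cannot expect a pointwise modulus-of-continuity estimate for $\vecf^\prime$, and it is here that the growth condition $\Phi(x) = O(x^{2/p})$ enters in a borderline-sharp way.
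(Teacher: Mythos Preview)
Your argument is correct and follows the same overall architecture as the paper's proof: a local/global split, with the local chord--arc estimate driven by the seminorm $[\vecf^\prime]_{\Psi,2p}$ together with the growth hypothesis $\Phi(x)=O(x^{2/p})$, and the global estimate coming from compactness and injectivity of the embedding. The core mechanism---$\Phi(d)^p/d^2$ stays bounded while the integral of the seminorm density over the shrinking diagonal strip tends to zero by absolute continuity---is identical in both arguments.

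The local computation is organized a bit differently. The paper first bounds the $L^1$ mean oscillation of $\vecf^\prime$ on short intervals by $\tfrac12$ (a BMO-type estimate, obtained via H\"older in place of your Jensen step) and then converts this into a chord lower bound through the duality representation $\|\vecf(s_2)-\vecf(s_1)\|_{\mathbb{R}^d}=\sup_{\|\svecx\|\le1}\int\vecf^\prime\cdot\vecx$. You instead use the exact identity $\|\vecf(s_2)-\vecf(s_1)\|_{\mathbb{R}^d}^2 = d^2 - \tfrac12\iint_{[s_1,s_2]^2}\|\vecf^\prime(\tau)-\vecf^\prime(\sigma)\|_{\mathbb{R}^d}^2$, which ties the chord length to the tangent oscillation without the auxiliary duality step. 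This is a shade more direct and makes the role of $\Psi$ (namely $\Psi(x)^{2p}x=\Phi(x)^p$) especially transparent; the paper's route, on the other hand, isolates a reusable mean-oscillation statement for $\vecf^\prime$. Either way the uniformity in $s_1$---which you rightly flag as the crux---is obtained by the same strip argument.
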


\begin{proof}
We only have to prove that there exists
$C_{\text b} > 0$
such that
\[
	\| \vecf(s_2) - \vecf(s_1) \|_{\mathbb{R}^d} \geq C_{\text b} \mathscr{D}(\vecf(s_1) , \vecf(s_2))
\]
for
$s_1 ,\,s_2 \in \mathbb{R} / \mathcal{L}\mathbb{Z}$.

Note that there exists
$M$,
$\delta > 0$
such that if
$x< \delta$,
then we have
$\Phi(x) \leq M x^{2/p}$
because
$\Phi(x) = O(x^{2/p})$
as
$x \to +0$.
By the assumption
$\vecf \in W^{1+\Psi , 2p} (\mathbb{R} / \mathcal{L}\mathbb{Z} , \mathbb{R}^d)$,
we have
\[
	[ \vecf^\prime ]_{\Psi , 2p}^{2p}
	=
	\int_{\mathbb{R} / \mathcal{L}\mathbb{Z}} \int_{-\mathcal{L}/2}^{\mathcal{L}/2}
	\frac{ \| \vecf^\prime(s_1+s_2) - \vecf^\prime(s_1) \|_{\mathbb{R}^d}^{2p} }{ \Phi(|s_2|)^p }
	ds_2ds_1
	< \infty.
\]
Using Lebesgue's dominated convergence theorem,
we have
\[
	\lim_{r \to +0}
	\int_{\mathbb{R} / \mathcal{L}\mathbb{Z}} \int_{-r}^r
	\frac{ \| \vecf^\prime(s_1+s_2) - \vecf^\prime(s_1) \|_{\mathbb{R}^d}^{2p} }{ \Phi(|s_2|)^p }
	ds_2ds_1
	= 0.
\]
Therefore,
there exists
$\eta \in (0, \min\{ \delta , 1 , \mathcal{L} \}/2 )$
such that
\[
	\sup_{s \in \mathbb{R} / \mathcal{L}\mathbb{Z}}
	\int_{s-r}^{s+r} \int_{-r}^r
	\frac{ \| \vecf^\prime(s_1+s_2) - \vecf^\prime(s_1) \|_{\mathbb{R}^d}^{2p} }{ \Phi(|s_2|)^p }
	ds_2ds_1
	\leq \frac 1 {2^{2p} M^p}
\]
if
$r \leq \eta$.
Hence,
for
$s \in \mathbb{R} / \mathcal{L}\mathbb{Z}$
and
$r \leq \eta$,
we get
\begin{align*}
	&\frac 1 {2r} \int_{s-r}^{s+r}
	\left\|
	\vecf^\prime(s_1) - 
	\frac 1 {2r} \int_{s-r}^{s+r} \vecf^\prime (s_2) ds_2
	\right\|_{\mathbb{R}^d} ds_1
\\
	&\leq
	\frac 1 {4r^2} \int_{s-r}^{s+r} \int_{s-r}^{s+r}
	\| \vecf^\prime(s_2) - \vecf^\prime(s_1) \|_{\mathbb{R}^d}
	ds_2ds_1
\\
	&\leq
	\left(
	\frac{ \Phi(2r)^p }{4r^2} \int_{s-r}^{s+r} \int_{s-r}^{s+r}
	\frac{ \| \vecf^\prime(s_2) - \vecf^\prime(s_1) \|_{\mathbb{R}^d}^{2p} }{ \Phi(|s_2-s_1|)^p }
	ds_2ds_1
	\right)^{1/2p}
	\leq \frac 1 2.
\end{align*}
Now,
let
$s_1$,
$s_2$,
$s_3 \in \mathbb{R} / \mathcal{L}\mathbb{Z}$
with
$|s_2-s_1| = 2r$
$(\leq 2 \eta)$
and
$\mathscr{D} (\vecf(s_1) , \vecf(s_3)) = \mathscr{D} (\vecf(s_3) , \vecf(s_2))$.
Then,
we have
\begin{align*}
	\| \vecf(s_2) - \vecf(s_1) \|_{\mathbb{R}^d}
	&=
	\sup_{\| \svecx \|_{\mathbb{R}^d} \leq 1}
	\int_{s_3-r}^{s_3+r} \vecf^\prime(s) \cdot \vecx ds
\\
	&=
	2r +
	\sup_{\| \svecx \|_{\mathbb{R}^d} \leq 1}
	\int_{s_3-r}^{s_3+r} \vecf^\prime(s) \cdot (\vecf^\prime(s) - \vecx ) ds
\\
	&\geq
	2r -
	\inf_{\| \svecx \|_{\mathbb{R}^d} \leq 1}
	\int_{s_3-r}^{s_3+r} \| \vecf^\prime(s)  - \vecx \|_{\mathbb{R}^d} ds
\\
	&=
	\left(
	1-
	\inf_{\| \svecx \|_{\mathbb{R}^d} \leq 1}
	\frac 1 {2r}
	\int_{s_3-r}^{s_3+r} \| \vecf^\prime(s)  - \vecx \|_{\mathbb{R}^d} ds
	\right) |s_2-s_1|
\\
	&\geq \frac 1 2 |s_2-s_1|
\end{align*}
because
\[
	\left\| \frac 1 {2r} \int_{s_3-r}^{s_3+r} \vecf(s) ds \right\|_{\mathbb{R}^d} \leq 1.
\]

Next,
we consider the case where
$\mathscr{D}(\vecf(s_1) , \vecf(s_2)) \geq 2 \eta$.
Let
\[
	I_\eta :=
	\{ (s_1,s_2) \in (\mathbb{R} / \mathcal{L}\mathbb{Z})^2
	\,|\,
	\mathscr{D}(\vecf(s_1) , \vecf(s_2)) \geq 2\eta \}.
\]
Then,
we have
\[
	C_{\text b}^\eta := \inf_{(s_1 , s_2) \in I_\eta}
	\frac{ \| \vecf(s_2) - \vecf(s_1) \|_{\mathbb{R}^d} }{ \mathscr{D}(\vecf(s_1) , \vecf(s_2)) }
	>0
\]
because
$\vecf$
has no self-intersection.
Therefore,
we obtain
\[
	\| \vecf(s_2) - \vecf(s_1) \|_{\mathbb{R}^d} \geq C_{\text b}^\eta \mathscr{D}(\vecf(s_1) , \vecf(s_2)).
\]
\end{proof}

Using the space
$W^{k+\Psi , 2p}$,
we establish the following theorem concerning the finiteness of the energies
$\mathcal{E}^{\Phi , p}$.

\begin{theorem}[Finiteness of $\mathcal{E}^{\Phi , p}(\vecf)$]\label{thm:bdd}
Let
$p \in [1,\infty)$,
and let
$\vecf \in C^{0,1} (\mathbb{R} / \mathcal{L}\mathbb{Z} , \mathbb{R}^d)$
be a function whose image is a closed curve parametrized by arc-length embedded in
$\mathbb{R}^d$
with total length
$\mathcal{L}$.
Assume that a measurable function
$\Phi : [0,\infty) \to [0,\infty)$
satisfies the following.
\begin{itemize}
\item[{\rm (A0)}]
	$\Phi(0) = 0$,
	$\Phi \in C^1$,
	and
	$\Phi^\prime (x) > 0$
	for
	$x > 0$.
\item[{\rm (A1)}]
	There exists
	$K>0$
	such that
	$\displaystyle\lim_{x \to +0}G(x) = K$,
	where
	$\displaystyle G(x) := \frac{x \Phi^\prime(x)}{\Phi(x)}$.
\item[{\rm (A2)}]
	There exists a measurable function
	$\varphi : [0,\infty) \to [0,\infty)$
	such that
	\begin{itemize}
	\item[{\rm (A2-1)}]
		$\Phi(kx) \leq \varphi(k)\Phi(x)$
		for
		$k,\,x \geq 0$,
	\end{itemize}
	and
	$\displaystyle M(a) := \int_0^a \frac{\varphi(t)^p} t dt$
	($a>0$)
	satisfies
	\begin{itemize}
	\item[{\rm (A2-2)}]
		$M(\ep) = o(\ep)$
		as
		$\ep \to +0$,
	\item[{\rm (A2-3)}]
		$M(a) < \infty$
		for
		$a>0$.
	\end{itemize}
\item[{\rm (A3)}]
	$\displaystyle \int_0^a \frac{t^{2p}}{\Phi(t)^p} dt < \infty$
	for
	$a >0$.
\end{itemize}
Set
$\displaystyle \Psi(x) := \left( \frac{\Phi(x)}{x^{1/p}} \right)^{1/2}$
for
$x>0$.
Then,
we have the following two properties.
\begin{itemize}
\item[{\rm 1.}]
	If
	$\vecf \in W^{1+\Psi , 2p} (\mathbb{R} / \mathcal{L}\mathbb{Z} , \mathbb{R}^d)$
	and
	$\vecf$
	is bi-Lipschitz continuous,
	then we have
	$\mathcal{E}^{\Phi, p}(\vecf) < \infty$.
\item[{\rm 2.}]
	If
	$\mathcal{E}^{\Phi, p}(\vecf) < \infty$,
	then
	$\vecf$
	belongs to
	$W^{1+\Psi , 2p} (\mathbb{R} / \mathcal{L}\mathbb{Z} , \mathbb{R}^d)$.
	
	Moreover,
	there exists
	$C>0$
	depending only
	$p$,
	$\mathcal{L}$,
	and
	$\Phi$
	such that
	\begin{equation}
		\| \vecf^\prime \|_{W^{\Psi , 2p}}^{2p} \leq C( \mathcal{E}^{\Phi , p}(\vecf) + \| \vecf^\prime \|_{L^{2p}}).
	\label{thm:bdd:ineq}
	\end{equation}
\end{itemize}
\end{theorem}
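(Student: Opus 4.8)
The plan is to compare the integrand of $\mathcal{E}^{\Phi,p}$ pointwise with the chord--arc defect of $\vecf$ and then to convert that defect into the seminorm $[\vecf']_{\Psi,2p}$. Write $d(s_1,s_2) := \| \vecf(s_2)-\vecf(s_1) \|_{\mathbb{R}^d}$ for the chord length and $D(s_1,s_2) := \mathscr{D}(\vecf(s_1),\vecf(s_2))$ for the intrinsic distance; since $\vecf$ is parametrized by arc length, $D = |s_2-s_1| =: \tau$ once $\tau \leq \mathcal{L}/2$, and always $d \leq D$. The engine of the whole argument is the exact identity
\[
	D^2 - d^2
	=
	\frac 1 2 \int_{s_1}^{s_2} \int_{s_1}^{s_2}
	\| \vecf^\prime(\sigma) - \vecf^\prime(\sigma^\prime) \|_{\mathbb{R}^d}^2 \, d\sigma \, d\sigma^\prime,
\]
which follows from $\| \vecf^\prime \| \equiv 1$. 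I would also record the simplification $\Psi(x)^{2p} = \Phi(x)^p / x$, so that $[\vecf^\prime]_{\Psi,2p}^{2p} = \iint \| \vecf^\prime(s_2)-\vecf^\prime(s_1) \|^{2p} \Phi(|s_2-s_1|)^{-p} \, ds_1 ds_2$; it is this Gagliardo-type quantity that must be matched to the energy. Finally I split $(\mathbb{R}/\mathcal{L}\mathbb{Z})^2$ into a near-diagonal part $\{ D < \eta \}$, with $\eta$ small enough that $D = \tau < \delta$ ($\delta$ from (A1)), and its complement.

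The key preliminary is a two-sided pointwise comparison on the near-diagonal region. Writing $\Phi(d)^{-1} - \Phi(D)^{-1} = (\Phi(D)-\Phi(d))/(\Phi(d)\Phi(D))$ and applying the mean value theorem, $\Phi(D)-\Phi(d) = \Phi^\prime(\xi)(D-d)$ with $\xi \in (d,D)$, I use (A1) in the form $\Phi^\prime(\xi) \asymp \Phi(\xi)/\xi$ for $\xi < \delta$ together with monotonicity and the doubling bound (A2-1) to obtain
\[
	\frac 1 {\Phi(d)} - \frac 1 {\Phi(D)}
	\; \asymp \;
	\frac{D-d}{D \, \Phi(D)}
	\; \asymp \;
	\frac{D^2-d^2}{D^2 \, \Phi(D)}.
\]
The lower bound uses only (A1) and $d \leq \xi \leq D$ and is thus available \emph{without} bi-Lipschitz control, while the upper bound additionally invokes bi-Lipschitz continuity $d \geq C_{\mathrm{b}} D$ and (A2-1) to replace $\Phi(d)$ by a multiple of $\Phi(D)$.

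For Direction~1 (sufficiency), combining the upper comparison with the identity and Jensen's inequality (power mean, $p \geq 1$) gives, on the near part, $(\Phi(d)^{-1}-\Phi(D)^{-1})^p \lesssim (\tau^2 \Phi(\tau)^p)^{-1} \int_{s_1}^{s_2}\int_{s_1}^{s_2} \| \vecf^\prime(\sigma)-\vecf^\prime(\sigma^\prime) \|^{2p}$. Integrating in $(s_1,s_2)$ and exchanging the order of integration reduces matters, for fixed $u = |\sigma-\sigma^\prime|$, to the scalar weight $\int_u^{\eta} (\tau-u) \tau^{-2} \Phi(\tau)^{-p} \, d\tau$, which is $\leq M(1)/\Phi(u)^p$ by (A2-1) and (A2-3) after the substitution $t = u/\tau$; this yields the near part $\lesssim [\vecf^\prime]_{\Psi,2p}^{2p} < \infty$. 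On the far part, bi-Lipschitz continuity (equivalently absence of self-intersection, as in Theorem~\ref{thm:bi}) bounds $d$ from below, so the integrand is bounded and integrates to a finite constant over the compact domain. Here (A3) plays the role of a consistency condition: for a $C^2$ curve the near-diagonal integrand scales like $\tau^{2p}/\Phi(\tau)^p$, so (A3) is exactly what places regular curves in $W^{1+\Psi,2p}$.

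Direction~2 (necessity and the estimate \eqref{thm:bdd:ineq}) is the crux. Membership $\vecf \in W^{1,2p}$ is automatic since $\| \vecf^\prime \| \equiv 1$. The lower comparison and the identity give $\mathcal{E}^{\Phi,p}(\vecf) \gtrsim \int \int_0^{\eta} A(c,\tau)^p \Phi(\tau)^{-p} \, d\tau \, dc$, where $A(c,\tau) := \tau^{-2} \int_{c-\tau/2}^{c+\tau/2}\int_{c-\tau/2}^{c+\tau/2} \| \vecf^\prime(\sigma)-\vecf^\prime(\sigma^\prime) \|^2$ is the square-mean oscillation of $\vecf^\prime$ at scale $\tau$. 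It then remains to bound $[\vecf^\prime]_{\Psi,2p}^{2p}$ by this right-hand side plus lower-order terms. When $p=1$ this is a direct Fubini: the same scalar weight as above, now bounded \emph{below} by a multiple of $1/\Phi(u)$ by restricting $\tau \in [2u,3u]$ and using (A2-1), reconstructs the seminorm. For $p>1$ this is the main obstacle, because Jensen now runs the wrong way: the averaged defect $A(c,\tau)^p$ admits no pointwise comparison to an average of $2p$-th powers. I expect to overcome this by a multiscale (Besov / mean-oscillation) argument, estimating the endpoint differences $\| \vecf^\prime(c+\tau/2)-\vecf^\prime(c-\tau/2) \|$ by telescoping sums of $A(c,\tau 2^{-j})^{1/2}$ across dyadic scales and then summing the scales via a discrete Hardy inequality. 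The regular variation encoded in (A1) together with the doubling (A2-1) supplies the geometric factors $\Phi(\tau 2^{-j}) \asymp \varphi(2^{-j}) \Phi(\tau)$ needed for the Hardy step, while (A2-2) and (A3) guarantee convergence of the scale sums and absorption of the subquadratic error terms; tracking the constants then produces \eqref{thm:bdd:ineq} with $C$ depending only on $p$, $\mathcal{L}$, $\Phi$, the far-diagonal contribution to $[\vecf^\prime]_{\Psi,2p}^{2p}$ being controlled by a constant multiple of $\| \vecf^\prime \|_{L^{2p}}$. Once $\vecf \in W^{1+\Psi,2p}$ is established, Theorem~\ref{thm:bi} retroactively yields its bi-Lipschitz continuity.
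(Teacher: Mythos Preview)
Your treatment of Direction~1 is essentially the paper's: the mean value theorem applied to $1/\Phi$, the identity $D^2 - d^2 = \tfrac{1}{2}\iint \|\vecf'(\sigma)-\vecf'(\sigma')\|^2$, bi-Lipschitz continuity to replace $\Phi(d)$ by a multiple of $\Phi(D)$ via (A2-1), Jensen, and Fubini with a change of variables producing the factor $M(C_{\mathrm b})$ from (A2-3). The paper phrases this through $g_y(x) = \Phi(x)^{-1} - \Phi(y)^{-1}$ and the function $G = x\Phi'/\Phi$ rather than your ratio form, but the substance is the same.

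For Direction~2 your starting point again coincides with the paper's --- the lower pointwise comparison and the identity give $\mathcal{E}^{\Phi,p}(\vecf) \geq c\,\tilde{\mathcal{E}}^{\Phi,p}(\vecf')$, where $\tilde{\mathcal{E}}^{\Phi,p}(\vecg) := \iint \Phi(|s_2|)^{-p}\bigl(\int_0^1\!\int_0^1 \|\Delta_{s_1+s_3 s_2}^{s_1+s_4 s_2}\vecg\|\,ds_3\,ds_4\bigr)^p\,ds_2\,ds_1$ --- but the passage from $\tilde{\mathcal{E}}^{\Phi,p}(\vecf')$ back to $[\vecf']_{\Psi,2p}^{2p}$ is where the paper does something you do not. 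It isolates this step as a separate lemma (Lemma~\ref{lem:bdd}) proved by an \emph{absorption} argument: for smooth $\vecg$, split $[\vecg]_{\Psi,2p}^{2p} = J_\ep^1 + J_\ep^2$ into far/near-diagonal pieces; on the near piece insert averages over $s_3\in[0,\ep]$, $s_4\in[1-\ep,1]$ and use the triangle inequality to obtain three terms; the middle one is dominated by $\tilde{\mathcal{E}}^{\Phi,p}(\vecg)$, while a change of variables together with (A2-1) bounds each endpoint term by $M(\ep)\ep^{-1}\,J_\ep^2(\vecg)$. Condition (A2-2) is precisely what makes $M(\ep)/\ep$ small enough to absorb these into the left-hand side, and (A3) is what guarantees $J_\ep^2(\vecg)<\infty$ a priori for smooth $\vecg$ so that the absorption is legitimate. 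The extension to non-smooth $\vecg$ proceeds by mollification and weak lower semicontinuity, using the reflexivity of $W^{\Psi,2p}$.

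Your proposed alternative --- telescoping $\|\vecf'(c+\tau/2)-\vecf'(c-\tau/2)\|$ into a dyadic sum of $A(c,\tau 2^{-j})^{1/2}$ and closing with a Hardy inequality --- is a genuine gap as written. A pointwise telescoping at fixed center $c$ does not recover the endpoint difference from the $L^2$-mean oscillations $A(c,\cdot)$; one must also slide the centers, after which the Hardy step becomes a double sum whose convergence does not obviously follow from (A1)--(A3). Your sketch invokes (A2-2) and (A3) without saying which sum they control, whereas in the paper these conditions have very concrete roles (smallness for absorption, and a priori finiteness, respectively). Finally, your closing remark overreaches: Theorem~\ref{thm:bi} requires $\Phi(x)=O(x^{2/p})$ near $0$, which is not among the hypotheses of the present theorem; this is exactly the distinction drawn in Remark~\ref{rmk:thm:bdd}.
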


\begin{remark}\label{rmk:thm:bdd}
Suppose we assume
\begin{itemize}
\item[(A2-2)${}^\prime$]
	$\varphi(x) = O(x^{2/p})$
	as
	$x \to \infty$
\end{itemize}
instead of (A2-2) in Theorem \ref{thm:bdd}.
Then,
we have
$M(\ep) = o(\ep)$
as
$\ep \to +0$,
and using the argument of
\cite{O94},
we can prove that
$\vecf$
is bi-Lipschitz continuous if
$\mathcal{E}^{\Phi , p}(\vecf) < \infty$.
Thus,
it holds that
$\mathcal{E}^{\Phi , p}(\vecf) < \infty$
if and only if
$
	\vecf \in W^{1+\Psi , 2p} (\mathbb{R} / \mathcal{L}\mathbb{Z} , \mathbb{R}^d)
	\cap
	C^{0,1} (\mathbb{R} / \mathcal{L}\mathbb{Z} , \mathbb{R}^d)
$
and
$\vecf$
is bi-Lipschitz continuous.
\end{remark}

The following table shows ranges of
$\alpha$
satisfying the assumptions of Theorems \ref{thm:bi} or \ref{thm:bdd},
which contains some examples of
$\Phi$.
The column ``Remark \ref{rmk:thm:bdd}'' shows ranges of $\alpha$
satisfying (A0), (A1), (A2-1), (A2-2)${}^\prime$, (A2-3), and (A3).
\begin{table}[h]
\small
\begin{center}
{\renewcommand\arraystretch{1.1}
\begin{tabular}{r||c|c|c}
 & $\Phi(x) = x^\alpha$ & $\Phi(x) = x^\alpha\log (x+1)$ & $\Phi(x) = 1-e^{-x^\alpha}+x^{2\alpha} /2$\\ \hline \hline
Theorem \ref{thm:bi} & $[2/p , \infty)$ & $[2/p-1 , \infty)$ & $[1/p, \infty)$\\ \hline
Theorem \ref{thm:bdd} & $(1/p , 2+1/p)$ & $(1/p , 1/p+1)$ & $(1/p , 2+1/p)$\\ \hline
Remark \ref{rmk:thm:bdd} & $[2/p , 2+1/p)$ & $[2/p,1/p+1)$ & $[2/p , 2+1/p)$\\
 & & $(p>1)$ &
\end{tabular}
}
\normalsize
\caption{Examples of $\Phi$}
\end{center}
\end{table}

\begin{notation}
For
$s_1$,
$s_2 \in \mathbb{R} / \mathcal{L}\mathbb{Z}$
and
$\vecv : \mathbb{R} / \mathcal{L}\mathbb{Z} \to \mathbb{R}^d$,
we write
$\Delta_{s_1}^{s_2} \vecv := \vecv(s_2) - \vecv(s_1)$.
\end{notation}

The proof of Theorem \ref{thm:bdd} is based on an argument by Blatt
\cite{B12}.
Before proving Theorem \ref{thm:bdd},
we establish the following lemma which is used in proof of inequality (\ref{thm:bdd:ineq}).
Let
\[
	\tilde{\mathcal{E}}^{\Phi , p} (\vecg)
	:=
	\int_{\mathbb{R} / \mathcal{L}\mathbb{Z}} \int_{-\mathcal{L}/2}^{\mathcal{L}/2}
	\frac{ \left( \int_0^1 \int_0^1 \| \Delta_{s_1+s_3s_2}^{s_1+s_4s_2} \vecg \|_{\mathbb{R}^d} ds_4ds_3 \right)^p }
	{ \Phi(|s_2|)^p }
	ds_2ds_1
\]
for
$\vecg : \mathbb{R} / \mathcal{L}\mathbb{Z} \to \mathbb{R}^d$.

\begin{lemma}\label{lem:bdd}
There exists
$C = C(p, \mathcal{L}, \Phi)>0$
such that
\[
	[\vecg]_{\Psi , 2p}^{2p} \leq C \left( \tilde{\mathcal{E}}^{\Phi , p} (\vecg) + \| \vecg \|_{L^{2p}}^{2p} \right)
\]
for all almost-everywhere continuous functions
$\vecg : \mathbb{R} / \mathcal{L}\mathbb{Z} \to \mathbb{R}^d$.
\end{lemma}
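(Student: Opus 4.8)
The plan is to relate the seminorm $[\vecg]_{\Psi,2p}^{2p}$ directly to $\tilde{\mathcal{E}}^{\Phi,p}(\vecg)$ by expanding the definition of $\Psi$ and averaging. Writing out the left-hand side, we have
\[
	[\vecg]_{\Psi,2p}^{2p}
	=
	\int_{\mathbb{R}/\mathcal{L}\mathbb{Z}} \int_{-\mathcal{L}/2}^{\mathcal{L}/2}
	\frac{ \| \Delta_{s_1}^{s_1+s_2} \vecg \|_{\mathbb{R}^d}^{2p} }{ \Phi(|s_2|)^p }
	ds_2\,ds_1,
\]
after substituting $\Psi(x)^{2p} = \Phi(x)^p/x$ and reparametrizing the inner integral by $s_2 \mapsto s_1 + s_2$ over an interval of length $\mathcal{L}$ (using periodicity to center it at $s_1$). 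The difficulty is that the integrand of $\tilde{\mathcal{E}}^{\Phi,p}$ involves a \emph{double average} of increments $\Delta_{s_1+s_3s_2}^{s_1+s_4s_2}\vecg$ over $s_3,s_4 \in [0,1]$, not the single increment $\Delta_{s_1}^{s_1+s_2}\vecg$. So the first and main step is to bound the genuine increment by the averaged one.

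The key idea I would pursue is a \emph{translation-and-averaging} argument. For fixed $s_2$, the plain increment $\|\Delta_{s_1}^{s_1+s_2}\vecg\|$ cannot be controlled pointwise by the double average at the \emph{same} base point $s_1$, because the double average can be small even when a single increment is large. Instead, I would integrate over $s_1$ and exploit the freedom in the inner variables. Concretely, one writes $\Delta_{s_1}^{s_1+s_2}\vecg$ as a telescoping sum or uses the identity relating $\vecg(s_1+s_2)-\vecg(s_1)$ to shifted increments, so that averaging over a band of base points of width comparable to $|s_2|$ produces exactly the double integral appearing in $\tilde{\mathcal{E}}^{\Phi,p}$. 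The point is that integrating $\|\Delta_{s_1}^{s_1+s_2}\vecg\|^{2p}$ over $s_1$, after a change of variables, is dominated by integrating the $p$-th power of the double average of nearby increments, at the cost of a constant depending on $p$ and of invoking Jensen's inequality to move the power inside the averages.

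The second step is to handle the homogeneity in $s_2$ correctly: the left side carries $\Phi(|s_2|)^p$ in the denominator while $\tilde{\mathcal{E}}^{\Phi,p}$ carries the same factor, so no growth assumption on $\Phi$ is needed here beyond measurability and positivity — this is why the lemma is stated without invoking (A0)--(A3). The $L^{2p}$ term enters only to absorb the contribution near the diagonal and the far range $|s_2|$ close to $\mathcal{L}/2$, where $\Phi(|s_2|)$ is bounded below and one simply estimates $\|\Delta_{s_1}^{s_1+s_2}\vecg\|^{2p} \leq 2^{2p-1}(\|\vecg(s_1+s_2)\|^{2p}+\|\vecg(s_1)\|^{2p})$ and integrates to get $\|\vecg\|_{L^{2p}}^{2p}$. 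I expect the main obstacle to be making the averaging step precise while keeping the constant $C$ independent of $\vecg$: one must verify that the reparametrizations sending single increments to averaged double increments do not distort the $\Phi(|s_2|)^{-p}$ weight by more than a controllable factor, and that Jensen's inequality is applied in the correct direction (moving from the average of norms to the norm of differences raised to the $2p$ power). Once the pointwise-in-$s_2$ inequality
\[
	\int_{\mathbb{R}/\mathcal{L}\mathbb{Z}} \| \Delta_{s_1}^{s_1+s_2}\vecg \|_{\mathbb{R}^d}^{2p} \, ds_1
	\leq
	C \int_{\mathbb{R}/\mathcal{L}\mathbb{Z}}
	\left( \int_0^1\int_0^1 \| \Delta_{s_1+s_3s_2}^{s_1+s_4s_2}\vecg \|_{\mathbb{R}^d} \, ds_4\,ds_3 \right)^{p} ds_1
	+ C\|\vecg\|_{L^{2p}}^{2p}
\]
is established for each $s_2$, dividing by $\Phi(|s_2|)^p$ and integrating in $s_2$ over $(-\mathcal{L}/2,\mathcal{L}/2)$ yields the claimed bound.
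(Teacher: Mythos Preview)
Your approach has a genuine gap. The pointwise-in-$s_2$ inequality you propose,
\[
\int_{\mathbb{R}/\mathcal{L}\mathbb{Z}} \|\Delta_{s_1}^{s_1+s_2}\vecg\|_{\mathbb{R}^d}^{2p}\,ds_1
\le C\int_{\mathbb{R}/\mathcal{L}\mathbb{Z}}\Bigl(\int_0^1\!\!\int_0^1\|\Delta_{s_1+s_3s_2}^{s_1+s_4s_2}\vecg\|_{\mathbb{R}^d}\,ds_4\,ds_3\Bigr)^{p} ds_1 + C\|\vecg\|_{L^{2p}}^{2p},
\]
cannot deliver the lemma. If the term $C\|\vecg\|_{L^{2p}}^{2p}$ is present with $C$ independent of $s_2$, then after dividing by $\Phi(|s_2|)^p$ and integrating over $s_2\in(-\mathcal{L}/2,\mathcal{L}/2)$ you pick up $C\|\vecg\|_{L^{2p}}^{2p}\int_{-\mathcal{L}/2}^{\mathcal{L}/2}\Phi(|s_2|)^{-p}\,ds_2$, which diverges because $\Phi(0)=0$. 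If instead you drop the $L^{2p}$ term, the inequality is false for every nonconstant $\vecg$ by homogeneity: replacing $\vecg$ by $\lambda\vecg$ multiplies the left side by $\lambda^{2p}$ and the first term on the right by $\lambda^{p}$ only. So no translation-and-averaging identity can yield such a bound at a fixed scale $s_2$; the two sides simply do not have the same degree in $\vecg$.

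Relatedly, your claim that ``no growth assumption on $\Phi$ is needed here beyond measurability and positivity'' is incorrect: the lemma is stated under the standing hypotheses of Theorem~\ref{thm:bdd}, and its proof uses (A2-1), (A2-2) and (A3) in an essential way. The paper splits $[\vecg]_{\Psi,2p}^{2p}=J_\ep^1+J_\ep^2$ according to $|s_2|\gtrless\ep\mathcal{L}/2$; the far piece $J_\ep^1$ is bounded by $\|\vecg\|_{L^{2p}}^{2p}$ because $\Phi$ is bounded below there. For the near piece one inserts intermediate points $s_1+s_3s_2$, $s_1+s_4s_2$ with $s_3\in[0,\ep]$, $s_4\in[1-\ep,1]$ and uses the triangle inequality on $\|\Delta_{s_1}^{s_1+s_2}\vecg\|^2$, obtaining three terms $K_\ep^1,K_\ep^2,K_\ep^3$. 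The middle one is dominated by $\tilde{\mathcal{E}}^{\Phi,p}(\vecg)$; the outer two, after the change of variables $\tilde s_2=s_3s_2$ and the quasi-multiplicativity (A2-1), are bounded by $M(\ep)\ep^{2p-1}J_\ep^2$. This yields
\[
J_\ep^2 \le \frac{3^p}{\ep^{2p}}\,\tilde{\mathcal{E}}^{\Phi,p}(\vecg) + 2\cdot 3^p\,\frac{M(\ep)}{\ep}\,J_\ep^2,
\]
and (A2-2) lets one choose $\ep$ so that $2\cdot 3^p M(\ep)/\ep<1$, whence the last term is \emph{absorbed} into the left side. This absorption step is the heart of the argument and is exactly what a pointwise-in-$s_2$ scheme cannot reproduce. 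It also explains the two-stage structure you did not anticipate: the absorption requires $J_\ep^2<\infty$ a priori, which holds for $\vecg\in C^\infty$ thanks to (A3); the general almost-everywhere continuous case is then obtained by mollification, uniform bounds, and weak lower semicontinuity in the reflexive space $W^{\Psi,2p}$.
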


\begin{proof}
First,
we consider the case where
$\vecg \in C^\infty (\mathbb{R} / \mathcal{L}\mathbb{Z} , \mathbb{R}^d)$.
For
$\ep \in (0,1)$,
we decompose
\[
	[\vecg]_{\Psi , 2p}^{2p}
	=
	J_\ep^1(\vecg) + J_\ep^2(\vecg),
\]
where
\begin{align*}
	J_\ep^1(\vecg)
	&:=
	\int_{\mathbb{R} / \mathcal{L}\mathbb{Z}}
	\int_{|s_2| \geq \ep\mathcal{L}/2}
	\frac{ \| \Delta_{s_1}^{s_1+s_2} \vecg \|_{\mathbb{R}^d}^{2p} }{ \Phi(|s_2|)^p }
	ds_2ds_1,
\\
	J_\ep^2(\vecg)
	&:=
	\int_{\mathbb{R} / \mathcal{L}\mathbb{Z}}
	\int_{|s_2| \leq \ep\mathcal{L}/2}
	\frac{ \| \Delta_{s_1}^{s_1+s_2} \vecg \|_{\mathbb{R}^d}^{2p} }{ \Phi(|s_2|)^p }
	ds_2ds_1.
\end{align*}
Now,
we have
\[
	J_\ep^1(\vecg)
	\leq
	\frac{ 2^{2p}\mathcal{L} }{ \Phi(\ep\mathcal{L}) }
	\| \vecg \|_{L^{2p}}^{2p}
\]
because
$\Phi$
is an increasing function.
As in
\cite{B12},
it is not difficult to see
\[
	\ep^{2p} J_\ep^2(\vecg)
	\leq
	3^p( K_\ep^1(\vecg) + K_\ep^2(\vecg) + K_\ep^3(\vecg) ),
\]
where
\begin{align*}
	K_\ep^1(\vecg)
	&:=
	\int_{\mathbb{R} / \mathcal{L}\mathbb{Z}}
	\int_{|s_2| \leq \ep\mathcal{L}/2}
	\frac{ \left( \int_0^\ep \int_{1-\ep}^1
	\| \Delta_{s_1+s_4s_2}^{s_1+s_2} \vecg \|_{\mathbb{R}^d}^2 ds4ds_3 \right)^p}
	{ \Phi(|s_2|)^p }
	ds_2ds_1,
\\
	K_\ep^2(\vecg)
	&:=
	\int_{\mathbb{R} / \mathcal{L}\mathbb{Z}}
	\int_{|s_2| \leq \ep\mathcal{L}/2}
	\frac{ \left( \int_0^\ep \int_{1-\ep}^1
	\| \Delta_{s_1+s_3s_2}^{s_1+s_4s_2} \vecg \|_{\mathbb{R}^d}^2 ds4ds_3 \right)^p}
	{ \Phi(|s_2|)^p }
	ds_2ds_1,
\\
	K_\ep^3(\vecg)
	&:=
	\int_{\mathbb{R} / \mathcal{L}\mathbb{Z}}
	\int_{|s_2| \leq \ep\mathcal{L}/2}
	\frac{ \left( \int_0^\ep \int_{1-\ep}^1
	\| \Delta_{s_1}^{s_1+s_3s_2} \vecg \|_{\mathbb{R}^d}^2 ds4ds_3 \right)^p}
	{ \Phi(|s_2|)^p }
	ds_2ds_1.
\end{align*}
By the definition of
$\tilde{\mathcal{E}}^{\Phi , p} (\vecg)$,
we have
$K_\ep^2(\vecg) \leq \tilde{\mathcal{E}}^{\Phi , p}(\vecg)$.
Moreover,
we have
\begin{align*}
	&K_\ep^3(\vecg)
	=
	\ep^p
	\int_{\mathbb{R} / \mathcal{L}\mathbb{Z}}
	\int_{|s_2| \leq \ep\mathcal{L}/2}
	\frac{ \left( \int_0^\ep
	\| \Delta_{s_1}^{s_1+s_3s_2} \vecg \|_{\mathbb{R}^d}^{2p} ds_3 \right)^p}
	{ \Phi(|s_2|)^p }
	ds_2ds_1
\\
	&\leq
	\ep^{2p-1}
	\int_{\mathbb{R} / \mathcal{L}\mathbb{Z}}
	\int_0^\ep
	\int_{|s_2| \leq \ep\mathcal{L}/2}
	\frac{ \| \Delta_{s_1}^{s_1+s_3s_2} \vecg \|_{\mathbb{R}^d}^{2p} }
	{ \Phi(|s_2|)^p }
	ds_2ds_3ds_1
\\
	&=
	\ep^{2p-1}
	\int_{\mathbb{R} / \mathcal{L}\mathbb{Z}}
	\int_0^\ep
	\int_{|s_2| \leq s_3\ep\mathcal{L}/2}
	\frac{ \| \Delta_{s_1}^{s_1+\tilde{s}_2} \vecg \|_{\mathbb{R}^d}^{2p} }
	{ \Phi(|\tilde{s}_2| / s_3)^p s_3}
	d\tilde{s}_2ds_3ds_1
\\
	&\leq
	\ep^{2p-1}
	\int_{\mathbb{R} / \mathcal{L}\mathbb{Z}}
	\int_0^\ep
	\int_{|s_2| \leq s_3\ep\mathcal{L}/2}
	\frac{ \varphi(s_3)^p }{ s_3 }
	\frac{ \| \Delta_{s_1}^{s_1+\tilde{s}_2} \vecg \|_{\mathbb{R}^d}^{2p} }
	{ \Phi(|\tilde{s}_2|)^p }
	d\tilde{s}_2ds_3ds_1
\\
	&\leq
	M(\ep) \ep^{2p-1} J_\ep^2(\vecg)
\end{align*}
by H\"{o}lder's inequality,
Fubini's Theorem,
and (A2-1).
Also,
we have
$K_\ep^1(\vecg) = K_\ep^3(\vecg)$
by a change of variables.
Hence,
we get
\[
	J_\ep^2(\vecg) \leq
	\frac{ 3^p }{ \ep^{2p} } \tilde{\mathcal{E}}^{\Phi , p} (\vecg)
	+
	3^p \cdot 2\frac{M(\ep)} \ep J_\ep^2(\vecg).
\]
Now,
we can take
$\ep$
sufficiently small satisfying
\[
	3^p \cdot 2 \frac{M(\ep)} \ep < 1
\]
by (A2-2).
Then,
we get
\[
	J_\ep^2 (\vecg) \leq C(p , \ep , \varphi) \tilde{\mathcal{E}}^{\Phi , p} (\vecg)
\]
because
$J_\ep^2 (\vecg) < \infty$
by
$\vecg \in C^\infty (\mathbb{R} / \mathcal{L}\mathbb{Z} , \mathbb{R}^d)$
and because of (A3),
where
$C(p , \ep , \varphi)$
is a positive constant.
Therefore,
we obtain
\[
	[\vecg]_{\Psi , 2p}^{2p}
	\leq
	\frac{ 2^{2p}\mathcal{L} }{ \Phi(\ep\mathcal{L}/2) } \| \vecg \|_{L^{2p}}^{2p}
	+
	C(p , \ep , \varphi) \tilde{\mathcal{E}}^{\Phi , p}(\vecg).
\]

Next,
we consider the case where
$\vecg$
is an almost everywhere continuous function.
Let
$\phi \in C^\infty_0 (\mathbb{R})$
with
$\operatorname{supp} \phi \subset [-\mathcal{L}/2 , \mathcal{L}/2]$
and
\[
	\int_{-\mathcal{L}/2}^{\mathcal{L}/2} \phi(x) dx = 1,
\]
and define
$\phi_\ep (x) := \ep^{-1} \phi(x/\ep)$
for
$x \in \mathbb{R}$.
Set
\[
	\vecg_\ep (s) := \int_{-\mathcal{L}/2}^{\mathcal{L}/2} \phi_\ep(s) \vecg(s+x)ds.
\]
Then,
we have
\[
	[\vecg_\ep]_{\Psi , 2p}^{2p}
	\leq
	\frac{ 2^{2p}\mathcal{L} }{ \Phi(\ep\mathcal{L}/2) } \| \vecg_\ep \|_{L^{2p}}^{2p}
	+
	C(p , \ep , \varphi) \tilde{\mathcal{E}}^{\Phi , p}(\vecg_\ep)
\]
because
$\vecg_\ep \in C^\infty$.
Also,
we have
\[
	\| \vecg_\ep \|_{L^{2p}} = \| \vecg  \|_{L^{2p}},
	\qquad
	\tilde{\mathcal{E}}^{\Phi , p} (\vecg_\ep) \leq \tilde{\mathcal{E}}^{\Phi , p} (\vecg).
\]
Hence,
we get
\[
	\| \vecg_\ep \|_{W^{\Psi , 2p}}^{2p}
	\leq
	2^{2p-1}
	\left\{
	\left( 1+ \frac{ 2^{2p}\mathcal{L} }{ \Phi(\ep\mathcal{L}/2) } \right)
	\| \vecg \|_{L^{2p}}^{2p}
	+
	C(p , \ep , \varphi) \tilde{\mathcal{E}}^{\Phi , p}(\vecg)
	\right\},
\]
and we can see
$\{ \vecg_\ep \}_{\ep > 0}$
is a
$W^{\Psi , 2p}$-bounded sequence.
By reflexivity of
$W^{\Psi , 2p}$,
there exists a subsequence
$\{ \vecg_{\ep_j} \}_{j=0}^{\infty}$
such that
\[
	\vecg_{\ep_j} \rightharpoonup \vecg
\]
as
$j \to \infty$.
Therefore,
we obtain
\begin{align*}
	[\vecg]_{\Psi . 2p}^{2p}
	&\leq \| \vecg \|_{W^{\Psi , 2p}}^{2p}
	\leq
	\liminf_{j \to \infty}
	\| \vecg_{\ep_j} \|_{W^{\Psi , 2p}}^{2p}
\\
	&\leq
	2^{2p-1}
	\left\{
	\left( 1+ \frac{ 2^{2p}\mathcal{L} }{ \Phi(\ep\mathcal{L}/2) } \right)
	\| \vecg \|_{L^{2p}}^{2p}
	+
	C(p , \ep , \varphi) \tilde{\mathcal{E}}^{\Phi , p}(\vecg)
	\right\}
\end{align*}
by lower semi-continuity of a weakly convergent sequence.
\end{proof}

\begin{proof}[Proof of Theorem \ref{thm:bdd}.]
For
$\ep \in (0, \mathcal{L}/2)$,
let
\[
	\mathcal{E}^{\Phi , p}_\ep (\vecf)
	:=
	\int_{\mathbb{R} / \mathcal{L}\mathbb{Z}} \int_{ \ep \leq |s_2| \leq \mathcal{L}/2 }
	( g_{|s_2|} ( \| \Delta_{s_1}^{s_1+s_2} \vecf \|_{\mathbb{R}^d }) )^p
	ds_2ds_1.
\]
Then,
we have
$\mathcal{E}^{\Phi , p} (\vecf) = \lim_{\ep \to +0} \mathcal{E}^{\Phi , p}_\ep (\vecf)$.
By the mean value theorem,
for
$s_1 \in \mathbb{R} / \mathcal{L}\mathbb{Z}$,
$\ep \leq |s_2| \leq \mathcal{L}/2$,
there exists
$\theta = \theta(s_1,s_2) \in ( \| \Delta_{s_1}^{s_1+s_2} \vecf \|_{\mathbb{R}^d} , |s_2| )$
such that
\begin{equation}
	g_{|s_2|} (\| \Delta_{s_1}^{s_1+s_2} \vecf \|_{\mathbb{R}^d})
	=
	-g_{|s_2|}^\prime(\theta) (|s_2| - \| \Delta_{s_1}^{s_1+s_2} \vecf \|_{\mathbb{R}^d}).
\label{pf:thm:bdd:mean}
\end{equation}
By (A1),
for all
$\eta > 0$,
there exists
$\delta > 0$
such that if
$0<x<\delta$
then we have
\[
	K-\eta \leq G(x) \leq K+\eta.
\]

First,
we assume that
$\vecf$
is bi-Lipschitz continuous and belongs to
$W^{1+\Psi , 2p} (\mathbb{R} / \mathcal{L}\mathbb{Z} , \mathbb{R}^d)$.
By H\"{o}lder's inequality and
(\ref{pf:thm:bdd:mean}),
we have
\begin{align*}
	&\mathcal{E}^{\Phi , p}_\ep(\vecf)
	=
	\int_{\mathbb{R} / \mathcal{L}\mathbb{Z}} \int_{\ep \leq |s_2| \leq \mathcal{L}/2}
	( g_{|s_2|} (\| \Delta_{s_1}^{s_1+s_2} \vecf \|_{\mathbb{R}^d} )^p ds_2ds_1
\\
	&=
	\int_{\mathbb{R} / \mathcal{L}\mathbb{Z}} \int_{\ep \leq |s_2| \leq \mathcal{L}/2}
	\{ -g_{|s_2|}^\prime (\theta)
	(|s_2| - \| \Delta_{s_1}^{s_1+s_2} \vecf \|_{\mathbb{R}^d})
	\}^p ds_2ds_1
\\
	&\leq
	\frac 1 {2^p}
	\int_{\mathbb{R} / \mathcal{L}\mathbb{Z}} \int_{\ep \leq |s_2| \leq \mathcal{L}/2}
	\int_0^1 \int_0^1
	( -g_{|s_2|}^\prime (\theta) |s_2| )^p
	\| \Delta_{s_1+s_4s_2}^{s_1+s_3s_2} \vecf^\prime \|_{\mathbb{R}^d}^{2p}
	ds_4ds_3ds_2ds_1.
\end{align*}
By the bi-Lipschitz continuity of
$\vecf$
and
(A2-1),
we have
\[
	-g_{|s_2|}^\prime (\theta) |s_2| \Phi(|s_4-s_3||s_2|)
	\leq
	C_{\text b} G(\theta) \varphi(C_{\text b} |s_4-s_3|)
\]
for
$s_1 \in \mathbb{R} / \mathcal{L}\mathbb{Z}$,
$\ep \leq |s_2| \leq \mathcal{L}/2$,
$s_3$,
$s_4 \in [0,1]$,
where
$C_{\text b} > 0$
is the bi-Lipschitz constant of
$\vecf$.
By (A2-3) and Fubini's theorem,
we have
\begin{align*}
	&\int_{\mathbb{R} / \mathcal{L}\mathbb{Z}} \int_{\ep \leq |s_2| \leq \mathcal{L}/2}
	\int_0^1 \int_0^1
	( -g_{|s_2|}^\prime (\theta) |s_2| )^p
	\| \Delta_{s_1+s_4s_2}^{s_1+s_3s_2} \vecf^\prime \|_{\mathbb{R}^d}^{2p}
	ds_4ds_3ds_2ds_1
\\
	&\leq
	C_{\text b}^p
	\int_{\mathbb{R} / \mathcal{L}\mathbb{Z}} \int_{-\mathcal{L}/2}^{\mathcal{L}/2}
	\int_0^1 \int_0^1 G(\theta)^p \varphi(C_{\text b} |s_4-s_3|)^p
	\frac{ \| \Delta_{s_1+s_3s_2}^{s_1+s_4s_2} \vecf^\prime \|_{\mathbb{R}^d}^{2p} }
	{ \Phi(|s_4-s_3||s_2|)^p }
	ds_4ds_3ds_2ds_1
\\
	&=
	C_{\text b}^p
	\int_0^1\int_0^1 \varphi(C_{\text b} |s_4-s_3|)^p
	\int_{-\mathcal{L}/2}^{\mathcal{L}/2} \int_{\mathbb{R} / \mathcal{L}\mathbb{Z}}
	G(\theta)^p
	\frac{ \| \Delta_{s_1+s_3s_2}^{s_1+s_4s_2} \vecf^\prime \|_{\mathbb{R}^d}^{2p} }
	{ \Phi(|s_4-s_3||s_2|)^p }
	ds_1ds_2ds_3ds_4
\\
	&=
	C_{\text b}^p
	\int_0^1\int_0^1 \frac{ \varphi(C_{\text b} |s_4-s_3|)^p }{ |s_4-s_3| }
\\
	& \quad \times
	\int_{|s_4-s_3|\ep \leq |t_2| \leq |s_4-s_3|\mathcal{L}/2}
	\int_{\mathbb{R} / \mathcal{L}\mathbb{Z}}
	G(\tilde{\theta})^p
	\frac{ \| \Delta_{t_1}^{t_1+t_2} \vecf^\prime \|_{\mathbb{R}^d}^{2p} }
	{ \Psi(|t_2|)^p }
	\frac 1 {|t_2|}
	dt_1dt_2ds_3ds_4,
\end{align*}
where
$s_1$,
$s_2$
are transformed into
$t_1 = s_1+s_3s_2$,
$t_2 = (s_4-s_3)s_2$,
and we set
$\tilde{\theta} = \tilde{\theta}(t_1,t_2) = \theta(s_1,s_2)$
in the last equality.
We take
$\ep >0$
satisfying
$\ep \leq \delta$.
For
$s_3$,
$s_4 \in [0,1]$,
we decompose
\[
	\int_{|s_4-s_3|\ep \leq |t_2| \leq |s_4-s_3|\mathcal{L}/2}
	\int_{\mathbb{R} / \mathcal{L}\mathbb{Z}}
	G(\tilde{\theta})^p 
	\frac{ \| \Delta_{t_1}^{t_1+t_2} \vecf^\prime \|_{\mathbb{R}^d}^{2p} }
	{ \Psi(|t_2|)^p }
	\frac 1 {|t_2|}
	dt_1dt_2
	= I^1_{\ep,\delta}(\vecf^\prime) + I^2_{\ep,\delta}(\vecf^\prime),
\]
where
\begin{align*}
	I^1_{\ep,\delta}(\vecf^\prime)
	&:=
	\int_{|s_4-s_3|\ep \leq |t_2| \leq |s_4-s_3|\delta}
	\int_{\mathbb{R} / \mathcal{L}\mathbb{Z}}
	G(\tilde{\theta})^p
	\frac{ \| \Delta_{t_1}^{t_1+t_2} \vecf^\prime \|_{\mathbb{R}^d}^{2p} }
	{ \Psi(|t_2|)^p }
	\frac 1 {|t_2|}
	dt_1dt_2
\\
	I^2_{\ep,\delta}(\vecf^\prime)
	&:=
	\int_{|s_4-s_3|\delta \leq |t_2| \leq |s_4-s_3|\mathcal{L}/2}
	\int_{\mathbb{R} / \mathcal{L}\mathbb{Z}}
	G(\tilde{\theta})^p
	\frac{ \| \Delta_{t_1}^{t_1+t_2} \vecf^\prime \|_{\mathbb{R}^d}^{2p} }
	{ \Psi(|t_2|)^p }
	\frac 1 {|t_2|}
	dt_1dt_2.
\end{align*}
If
$|s_4-s_3|\ep \leq |t_2| \leq |s_4-s_3|\delta$,
we have
$G(\tilde{\theta}) \leq K+\eta$
because
$0 < \tilde{\theta} \leq \delta$.
Hence,
we get
\[
	I^1_{\ep,\delta} (\vecf^\prime) \leq (K+ \eta)^p [\vecf^\prime]_{\Psi , 2p}^{2p}.
\]
If
$|s_4-s_3|\delta \leq |t_2| \leq |s_4-s_3|\mathcal{L}/2$,
then we have
$C_{\text b}^{-1} \delta \leq \tilde{\theta} \leq \mathcal{L}/2$.
Hence,
we get
\[
	I^2_{\ep,\delta}(\vecf^\prime) \leq G_\delta^p [\vecf^\prime]_{\Psi , 2p}^{2p},
\]
where
$G_\delta := \max_{x \in [C_{\text b}^{-1}\delta , \mathcal{L}/2] } G(x)$.
By (A3),
we obtain
\[
	\mathcal{E}^{\Phi , p}_\ep (\vecf)
	\leq
	\frac{C_{\text b}^p}{2^p} \{ (K+ \eta)^p + G_\delta^p\} M(C_{\text b}) [\vecf^\prime]_{\Psi , 2p}^{2p} < \infty
\]
for all
$\ep \leq \delta$.
Thus it holds that
$\mathcal{E}^{\Phi , p}(\vecf) < \infty$.

Next,
we assume
$\mathcal{E}^{\Phi , p}(\vecf) < \infty$.
Then,
we have
\begin{align*}
	&\infty > \mathcal{E}^{\Phi , p}(\vecf)
	=
	\int_{\mathbb{R} / \mathcal{L}\mathbb{Z}} \int_{-\mathcal{L}/2}^{\mathcal{L}/2}
	( g_{|s_2|} (\| \Delta_{s_1}^{s_1+s_2} \vecf \|_{\mathbb{R}^d} )^p ds_2ds_1
\\
	&=
	\int_{\mathbb{R} / \mathcal{L}\mathbb{Z}} \int_{-\mathcal{L}/2}^{\mathcal{L}/2}
	\{ -g_{|s_2|}^\prime (\theta)
	(|s_2| - \| \Delta_{s_1}^{s_1+s_2} \vecf \|_{\mathbb{R}^d})
	\}^p ds_2ds_1
\\
	&\geq
	\frac 1 {4^p}
	\int_{\mathbb{R} / \mathcal{L}\mathbb{Z}} \int_{-\mathcal{L}/2}^{\mathcal{L}/2}
	\left(
	G(\theta)
	\int_0^1 \int_0^1
	\| \Delta_{s_1+s_3s_2}^{s_1+s_4s_2} \vecf^\prime \|_{\mathbb{R}^d}
	ds_4ds_3
	\right)^p
	ds_2ds_1
\\
	&\geq
	\frac{ (K+\tilde{G}_\delta)^p }{ 4^p }
	\tilde{\mathcal{E}}^{\Phi , p} (\vecf^\prime),
\end{align*}
where
$\tilde{G}_\delta := \min_{x \in [C_{\text b}^{-1}\delta , \mathcal{L}/2] } G(x)$.
Hence,
we get inequality (\ref{thm:bdd:ineq}) because it holds that
\[
	\| \vecf^\prime \|_{W^{\Psi , 2p}}
	\leq
	C_{\text g} \left( \| \vecf^\prime \|_{L^{2p}} + \tilde{\mathcal{E}}^{\Phi , p} (\vecf^\prime) \right)
	\leq
	C_{\text g} \left( \| \vecf^\prime \|_{L^{2p}} + \mathcal{E}^{\Phi , p} (\vecf) \right)
\]
by Lemma \ref{lem:bdd},
where
$C_{\text g}$
is a positive constant depending only
$p$,
$\mathcal{L}$,
and
$\Phi$
and which may not be the same in each case.
Therefore,
we obtain
$\vecf \in W^{1+ \Psi , 2p}(\mathbb{R} / \mathcal{L} \mathbb{Z} , \mathbb{R}^d)$.
\end{proof}

%
%

\section{A discretization of the O'Hara energies}\label{dis}

Although minimizers of the O'Hara energies were studied,
it is difficult to calculate the O'Hara energies directly,
and as a result,
it is not easy to evaluate well-balancedness.
In
\cite{KK93},
Kim and Kusner considered a discretization of the M\"{o}bius energy and numerically calculated values of M\"{o}bius energy of torus knots.
Scholtes
\cite{S14}
discussed convergence of Kim-Kusner's discretization,
but he did not use the M\"{o}bius invariance.
Therefore,
we expect that we can consider convergence of a discretization of not only M\"{o}bius energy but also the other O'Hara energies
$\mathcal{E}^{\alpha , p}$.
Actually,
in
\cite{K18},
a discretization of the O'Hara energies was defined, and convergence of this discretization were discussed.
In this section,
we mention the result of
\cite{K18}
and give some examples of numerical calculations of this discretization.

From now on,
we write
$\sigma = (\alpha p-1)/(2p)$
for
$\alpha \in (0,\infty)$
and
$p \in [1,\infty)$
with
$2 \leq \alpha p < 2p+1$.
Also,
we call an \textit{$n$-gon} a polygon with $n$ edges.
For a given regular curve
$\vecf$,
we say that a polygon
$\vecp$
is \textit{inscribed} in
$\vecf$
if
$\vecp$
satisfies
\begin{enumerate}
\renewcommand{\labelenumi}{(\roman{enumi})}
\item
	the number of vertices is finite.
\item
	the set of vertices is
	$\{ \vecf(b_1) , \ldots , \vecf(b_n) \}$
	with
	$b_1 < \cdots < b_n (< b_1+ \mathcal{L})$,
\item
	the $k$-th edge is the segment jointing
	$\vecf(b_k)$
	and
	$\vecf(b_{k+1})$,
	where we interpret
	$b_{n+1} = b_1$.
\end{enumerate}
For
$\alpha$,
$p \in (0,\infty)$,
our discretization of the O'Hara energies is defined by
\begin{multline*}
	\mathcal{E}^{\alpha ,p}_n (\vecp_n)
	:=
	\sum_{\substack{i,j=1\\i\ne j}}^n
	\left(
	\frac 1 { \| \vecp_n(a_j) - \vecp_n(a_i) \|_{\mathbb{R}^d}^\alpha }
	-
	\frac 1 { \mathscr{D}(\vecp_n(a_i) , \vecp_n(a_j))^\alpha }
	\right)^p
\\
	\times
	\| \vecp_n(a_{i+1}) - \vecp_n(a_i) \|_{\mathbb{R}^d}
	\| \vecp_n(a_{j+1}) - \vecp_n(a_j) \|_{\mathbb{R}^d},
\end{multline*}
where
$\vecp_n : \mathbb{R} / \mathcal{L}\mathbb{Z} \to \mathbb{R}^d$
is an $n$-gon parametrized by arc-length whose total length is
$\mathcal{L}_n$,
and
$a_j$
is the value of arc-length parameter corresponding to vertex of
$\vecp_n$
and we assume
$0 \leq a_1 < \cdots < a_n < \mathcal{L}_n$
$(\operatorname{mod}\mathcal{L}_n)$.

The following theorem obtained in
\cite{K18}
gives us convergence of our discretization as
$n \to \infty$
and the rule of convergence.

\begin{theorem}[Approximation of the O'Hara energies by inscribed polygons,
\cite{K18}]\label{thm:convergence}
Assume that
$\alpha \in (0, \infty)$
and
$p \in [1, \infty)$
satisfy
$2 \leq \alpha p < 2p+1$.
Let
$\vecf \in C^{1,1} (\mathbb{R} / \mathcal{L} \mathbb{Z} , \mathbb{R}^d)$
be a function which image is a closed curve parametrized by arc-length embedded in
$\mathbb{R}^d$,
where
$\mathcal{L}$
is the length of
$\vecf$.
Let
$c$,
$\bar{c} >0$,
and set
$K = \| \vecf^{\prime \prime} \|_{L^\infty (\mathbb{R} / \mathcal{L} \mathbb{Z} , \mathbb{R}^d)}$.

In addition,
for
$n \in \mathbb{N}$,
let
$\{ b_k \}_{k=1}^n$
be a partition of
$\mathbb{R} / \mathcal{L}\mathbb{Z}$
satisfying
\[
	\frac{c \mathcal{L}} n \leq \| \vecf(b_{k+1}) - \vecf(b_k) \|_{\mathbb{R}^d}
	\leq \frac{\bar{c}\mathcal{L}} n,
\]
and let
$\vecp_n$
be the inscribed polygon in
$\vecf$
with vertices
$\vecf(b_1)$,
$\ldots$,
$\vecf(b_n)$.
Then,
if the number
$n$
of points of the division is sufficiently large,
there exists
$C>0$
such that
\[
	| \mathcal{E}^{\alpha, p} (\vecf) - \mathcal{E}^{\alpha, p}_n (\vecp_n) |
	\leq
	C \frac 1 { n^{2p-\alpha p+1} }.
\]
Moreover,
if
$\vecf \in W^{1+ \sigma ,2p} (\mathbb{R} / \mathcal{L}\mathbb{Z} , \mathbb{R}^d )$,
we have
\[
	\lim_{n \to \infty} \mathcal{E}^{\alpha, p}_n (\vecp_n)
	=
	\mathcal{E}^{\alpha, p} (\vecf).
\]
\end{theorem}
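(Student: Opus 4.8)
The plan is to regard the discrete energy $\mathcal{E}^{\alpha,p}_n(\vecp_n)$ as a quadrature rule for the double integral defining $\mathcal{E}^{\alpha,p}(\vecf)$ and to estimate the quadrature error cell by cell. Writing $h_{\vecf}(s_1,s_2):=(\|\Delta_{s_1}^{s_2}\vecf\|_{\mathbb{R}^d}^{-\alpha}-\mathscr{D}(\vecf(s_1),\vecf(s_2))^{-\alpha})^p$ for the continuous integrand and $Q_{ij}:=[b_i,b_{i+1}]\times[b_j,b_{j+1}]$ for the mesh cells, I would compare $\iint_{Q_{ij}}h_{\vecf}$ with the $(i,j)$-summand of $\mathcal{E}^{\alpha,p}_n$ for each $i\ne j$, and bound separately the contribution of the diagonal cells $Q_{ii}$, which have no discrete counterpart. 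The key point is that the chordal term $\|\Delta_{b_i}^{b_j}\vecf\|$ in the summand is exactly the value of the continuous chordal term at the vertex $(b_i,b_j)$; thus only two genuine discrepancies remain, namely (i) the intrinsic distance $\mathscr{D}_{\mathrm{poly}}(i,j)$ along the inscribed polygon versus the arc-length $|b_j-b_i|$ along $\vecf$ in the geodesic term, and (ii) the replacement of the cell area $(b_{i+1}-b_i)(b_{j+1}-b_j)$ by the chord-length product $\|\Delta_{b_i}^{b_{i+1}}\vecf\|\,\|\Delta_{b_j}^{b_{j+1}}\vecf\|$ in the weight. Together with the genuine quadrature error of replacing $\iint_{Q_{ij}}h_{\vecf}$ by a vertex value times the cell area, these are the three quantities to control.

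Next I would collect the geometric estimates that $\vecf\in C^{1,1}$ supplies. With $K=\|\vecf^{\prime\prime}\|_{L^\infty}$ one has the arc--chord defect $0\le(b_{k+1}-b_k)-\|\Delta_{b_k}^{b_{k+1}}\vecf\|\le CK^2(b_{k+1}-b_k)^3$ on each edge, hence, on summing along a subarc, $0\le|b_j-b_i|-\mathscr{D}_{\mathrm{poly}}(i,j)\le CK^2|b_j-b_i|/n^2$ for the polygonal geodesic defect, together with the local expansion $\|\Delta_{s_1}^{s_2}\vecf\|=|s_2-s_1|(1+O(K^2|s_2-s_1|^2))$ and a bi-Lipschitz bound $\|\Delta_{s_1}^{s_2}\vecf\|\ge c_0\,\mathscr{D}(\vecf(s_1),\vecf(s_2))$ coming from embeddedness as in Theorem \ref{thm:bi}. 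As in the proof of Theorem \ref{thm:bdd}, I would then use the mean value theorem to write the inverse-power difference $\|\Delta_{s_1}^{s_2}\vecf\|^{-\alpha}-|s_2-s_1|^{-\alpha}$ as $\alpha\theta^{-\alpha-1}(|s_2-s_1|-\|\Delta_{s_1}^{s_2}\vecf\|)$ with $\theta$ between the chord and the arc; combined with the arc--chord defect this exposes the near-diagonal behaviour $h_{\vecf}(s_1,s_2)\sim C|s_2-s_1|^{p(2-\alpha)}$ (with a curvature-dependent constant) and reduces all three discrepancies above to powers of $|s_2-s_1|$ multiplied by factors $O(K^2/n^2)$ or $O(1/n)$.

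With these estimates in hand, the rate is extracted by splitting the pairs $(i,j)$ into a near-diagonal regime and a far regime at a dyadic scale. In the far regime $h_{\vecf}$ is smooth with bounded derivatives, and since the underlying domain is the closed curve $\mathbb{R}/\mathcal{L}\mathbb{Z}$, the first-order quadrature error should cancel in an Euler--Maclaurin fashion on the quasi-uniform periodic mesh; in the near-diagonal regime I would insert the asymptotics $h_{\vecf}\sim C|s_2-s_1|^{p(2-\alpha)}$ and sum the three discrepancies dyadically over bands $|s_2-s_1|\sim2^{-m}$ down to the mesh scale $2^{-m}\sim1/n$. Balancing the singular exponent $p(2-\alpha)=2p-\alpha p$ against the mesh scale produces the exponent $2p-\alpha p+1$, and the two-sided constraint $2\le\alpha p<2p+1$ is exactly what makes these dyadic sums finite: the upper bound $\alpha p<2p+1$ is the integrability of $h_{\vecf}$ across the diagonal, while the lower bound $\alpha p\ge2$ places $\vecf^{\prime}$ in $W^{1+\sigma,2p}$ with $\sigma=(\alpha p-1)/(2p)\ge1/(2p)$, the space in which the energy estimate (\ref{thm:bdd:ineq}) operates and keeps the near-diagonal sums controlled by $\mathcal{E}^{\alpha,p}(\vecf)$ uniformly in $n$.

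I expect the main obstacle to be the sharpness of the rate rather than mere convergence: a naive vertex quadrature only gives $O(1/n)$, which is already too weak whenever $2p-\alpha p+1>1$, i.e.\ when $\alpha<2$ (possible once $p>1$), so one must genuinely exploit periodicity and the attendant cancellation of boundary terms in the far regime, and carry out the dyadic bookkeeping near the diagonal with the correct leading order and uniformly in $n$. Finally, for the convergence statement under the weaker hypothesis $\vecf\in W^{1+\sigma,2p}$ (without a rate), I would approximate $\vecf$ in the $W^{1+\sigma,2p}$ norm by smooth, hence $C^{1,1}$, embedded curves $\vecf_\delta$, apply the rate estimate to each $\vecf_\delta$, and pass to the limit: the bound (\ref{thm:bdd:ineq}) yields continuity of $\vecf\mapsto\mathcal{E}^{\alpha,p}(\vecf)$ on bi-Lipschitz embedded curves in $W^{1+\sigma,2p}$ and a matching stability of the discrete energies $\mathcal{E}^{\alpha,p}_n$, so that a diagonal argument in $\delta$ and $n$ gives $\mathcal{E}^{\alpha,p}_n(\vecp_n)\to\mathcal{E}^{\alpha,p}(\vecf)$.
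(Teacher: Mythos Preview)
The paper does not prove this theorem: it is quoted from \cite{K18} and stated here without argument, so there is no in-paper proof to compare your proposal against.

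That said, your outline is a plausible route to the result. The geometric ingredients you list---the $C^{1,1}$ arc--chord defect $|s_2-s_1|-\|\Delta_{s_1}^{s_2}\vecf\|=O(K^2|s_2-s_1|^3)$, the polygonal geodesic defect $O(K^2|b_j-b_i|/n^2)$, the bi-Lipschitz bound from Theorem~\ref{thm:bi}, and the near-diagonal behaviour $h_{\vecf}\sim C|s_2-s_1|^{2p-\alpha p}$---are correct, and the omitted diagonal cells alone already contribute an error of exactly the claimed order $n^{-(2p-\alpha p+1)}$. The one place your sketch would need substantial work is the far-regime quadrature step: Euler--Maclaurin-type cancellation on a periodic domain is a feature of \emph{uniform} meshes, whereas the hypothesis here is only quasi-uniform ($c\mathcal{L}/n\le\|\Delta_{b_k}^{b_{k+1}}\vecf\|\le\bar c\mathcal{L}/n$), and a generic left-endpoint rule on such a mesh gives at best $O(1/n)$ for smooth periodic integrands. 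Since, as you correctly observe, the target exponent $2p-\alpha p+1$ can exceed $1$ when $p>1$ and $\alpha<2$, periodicity alone cannot close the gap; whatever mechanism \cite{K18} actually uses at this step would have to be supplied explicitly.
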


Moreover,
we obtained the
$\Gamma$-convergence of
$\mathcal{E}^{\alpha , p}_n$
to
$\mathcal{E}^{\alpha , p}$
as
$n \to \infty$
in
\cite{K18}.

\begin{theorem}[$\Gamma$-convergence of $\mathcal{E}_n^{\alpha , p}$,
\cite{K18}]
Let
$\alpha \in (0,\infty)$
and
$p \in [1,\infty)$
with
$2 \leq \alpha p < 2p+1$.
Then,
it holds that
$\mathcal{E}_n^{\alpha , p}$
$\Gamma$-converges to
$\mathcal{E}^{\alpha , p}$
on the metric space
$X$
given by
\[
	X := \left(
	\left( \mathcal{C}(\mathcal{K}) \cap C^1(\mathbb{R} / \mathcal{L}\mathbb{Z} , \mathbb{R}^d) \right)
	\cup \bigcup_{n \in \mathbb{N}} \mathcal{P}_n(\mathcal{K}), d_X \right).
\]
Here,
$\mathcal{K}$
is a tame knot class,
$\mathcal{C}(\mathcal{K})$
is the set of simply closed curves of length $1$ belonging to
$\mathcal{K}$,
$\mathcal{P}_n (\mathcal{K})$
is the set of equilateral $n$-gons with total length $1$ belonging to
$\mathcal{K}$,
and the metric
$d_X$
is such that,
for some constants
$C_1$,
$C_2 >0$,
we have
\[
	C_1 \| \vecf - \vecg \|_{L^1} \leq d_X (\vecf , \vecg)
	\leq C_2 \| \vecf - \vecg \|_{W^{1, \infty}}	
\]
for
$\vecf$,
$\vecg \in X$.
\end{theorem}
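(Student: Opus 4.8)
The plan is to verify the two defining inequalities of $\Gamma$-convergence---the $\liminf$ (lower-bound) inequality and the existence of a recovery sequence (upper-bound inequality)---after first extending both functionals to all of $X$ by setting $\mathcal{E}_n^{\alpha,p}(\vecg)=+\infty$ unless $\vecg$ is an equilateral $n$-gon, and by reading $\mathcal{E}^{\alpha,p}(\vecg)$ as its defining integral, which equals $+\infty$ on every genuine polygon once $\alpha p\geq2$ (near a corner the integrand behaves like $|s_2-s_1|^{-\alpha p}$, and $\iint|s_2-s_1|^{-\alpha p}$ diverges exactly when $\alpha p\geq 2$). Throughout I would use that every element of $X$ has length $1$, so the intrinsic distance $\mathscr{D}$ is simply the arc-length distance, common to curves and polygons, and that the two-sided bound $C_1\|\vecf-\vecg\|_{L^1}\leq d_X(\vecf,\vecg)\leq C_2\|\vecf-\vecg\|_{W^{1,\infty}}$ is what lets me pass to $L^1$-convergence in the lower bound and construct competitors in $W^{1,\infty}$ for the upper bound.

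For the $\liminf$ inequality I would fix $\vecf\in X$ and $\vecg_n\to\vecf$ in $d_X$ with $\liminf_n\mathcal{E}_n^{\alpha,p}(\vecg_n)<\infty$; passing to a subsequence realizing the $\liminf$, each $\vecg_n$ is then an equilateral $n$-gon. The key is to represent the discrete energy as a double integral $\mathcal{E}_n^{\alpha,p}(\vecg_n)=\iint H_n\,ds_1ds_2$, with $H_n$ constant on each product of edges and vanishing on the diagonal cells; this is legitimate because under arc-length parametrization the edge lengths coincide with the parameter increments that serve as weights. Since the $\vecg_n$ are $1$-Lipschitz and converge to $\vecf$ in $L^1$, Arzel\`a--Ascoli upgrades this to uniform convergence along a further subsequence, so $H_n\to H$ pointwise off the diagonal, where $H$ is the integrand of $\mathcal{E}^{\alpha,p}(\vecf)$. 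As $H_n\geq0$ (a chord never exceeds the corresponding arc), Fatou's lemma yields $\mathcal{E}^{\alpha,p}(\vecf)=\iint H\leq\liminf_n\mathcal{E}_n^{\alpha,p}(\vecg_n)$. This single argument also settles the case where $\vecf$ is a polygon, for then $\iint H=+\infty$ and the inequality is automatic.

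For the recovery sequence, if $\mathcal{E}^{\alpha,p}(\vecf)=+\infty$ (in particular whenever $\vecf$ is a polygon) any $d_X$-convergent sequence, e.g. $\vecg_n\equiv\vecf$, satisfies the upper bound trivially. The substantive case is $\vecf\in\mathcal{C}(\mathcal{K})\cap C^1$ with $\mathcal{E}^{\alpha,p}(\vecf)<\infty$; by the finiteness characterization (Theorem \ref{thm:bdd}, and Blatt \cite{B12}) such $\vecf$ is bi-Lipschitz and lies in $W^{1+\sigma,2p}$. I would take the equilateral polygons $\vecp_n$ obtained by partitioning $\vecf$ into $n$ equal chords and rescaling to unit length, the scaling factor tending to $1$; uniform continuity of $\vecf'$ forces the edge directions to converge uniformly to the tangent, so $\vecp_n\to\vecf$ in $W^{1,\infty}$ and hence in $d_X$, while Theorem \ref{thm:convergence} gives $\mathcal{E}_n^{\alpha,p}(\vecp_n)\to\mathcal{E}^{\alpha,p}(\vecf)$ and in particular the required $\limsup$. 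If Theorem \ref{thm:convergence} is to be invoked only for $C^{1,1}$ data, I would instead first mollify $\vecf$ to $C^{1,1}$ curves $\vecf_k\to\vecf$ with $\mathcal{E}^{\alpha,p}(\vecf_k)\to\mathcal{E}^{\alpha,p}(\vecf)$ (still embedded in $\mathcal{K}$ for large $k$), apply the theorem to each $\vecf_k$, and extract a diagonal sequence by Attouch's lemma.

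The hard part will be exactly this last approximation step: because $\mathcal{E}^{\alpha,p}$ is nonlocal and its short-range part is singular, establishing $\mathcal{E}^{\alpha,p}(\vecf_k)\to\mathcal{E}^{\alpha,p}(\vecf)$ demands uniform, equi-integrable control of the near-diagonal contribution, which is precisely the role played by the Sobolev-Slobodeckii seminorm and by an estimate of the type (\ref{thm:bdd:ineq}), together with the preservation of bi-Lipschitz embeddedness (and hence of the knot class $\mathcal{K}$) under mollification. By contrast the $\liminf$ half is comparatively soft, resting only on the uniform Lipschitz compactness of arc-length parametrizations and on Fatou's lemma. Combining the two inequalities then yields the $\Gamma$-convergence of $\mathcal{E}_n^{\alpha,p}$ to $\mathcal{E}^{\alpha,p}$ on $(X,d_X)$.
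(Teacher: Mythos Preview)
The paper does not prove this theorem; it is quoted from \cite{K18} without argument, so there is no in-paper proof to compare your proposal against. Your outline follows the standard two-step $\Gamma$-convergence scheme and is, as far as one can infer, the route taken in \cite{K18} (which in turn adapts Scholtes \cite{S14}): a Fatou-type lower bound after rewriting the discrete sum as a piecewise-constant double integral, and a recovery sequence built from inscribed polygons together with Theorem~\ref{thm:convergence}.

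Two points of execution deserve attention. First, your recovery sequence must lie in $\mathcal{P}_n(\mathcal{K})$, i.e.\ consist of \emph{equilateral} $n$-gons of unit length in the correct knot class, whereas Theorem~\ref{thm:convergence} is phrased for inscribed polygons with merely comparable edge lengths; you therefore need the (elementary but non-void) argument that an equal-chord inscribed $n$-gon exists, that it is ambient isotopic to $\vecf$ for large $n$, and that rescaling to unit length perturbs $\mathcal{E}_n^{\alpha,p}$ only by a factor tending to $1$. Second, note that the ``Moreover'' clause of Theorem~\ref{thm:convergence} already asserts $\mathcal{E}_n^{\alpha,p}(\vecp_n)\to\mathcal{E}^{\alpha,p}(\vecf)$ under the sole hypothesis $\vecf\in W^{1+\sigma,2p}$; read this way, the mollification-plus-diagonal step you single out as the hard part becomes unnecessary. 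If you prefer not to rely on that reading, your proposed detour through $C^{1,1}$ approximants is the right strategy, and the equi-integrability you need near the diagonal is exactly what the bound \eqref{thm:bdd:ineq} and the stability of the bi-Lipschitz constant under mollification provide. Your $\liminf$ half is correct as written.
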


By the property of
$\Gamma$-convergence
(e.g.\ in \cite{DM93}),
the minimum values of
$\mathcal{E}_n^{\alpha ,p}$
converge to that of
$\mathcal{E}^{\alpha , p}$
which is attained by a right circle
(cf.\ \cite{ACFGH03}).
Thus,
it is natural to consider minimizers of
$\mathcal{E}_n^{\alpha , p}$.
In
\cite{K18},
we can completely characterize minimizers of a generalized discrete functional defined by
\begin{multline*}
	\mathcal{E}^F_n (\vecp_n)
	:=
	\sum_{\substack{i,j=1\\ i\ne j}}^n
	F( \| \vecp_n(a_j) - \vecp_n(a_i) \|_{\mathbb{R}^d} , \mathscr{D}(\vecp_n(a_i) , \vecp_n(a_j) ) )
\\
	\times
	\| \vecp_n(a_{i+1}) - \vecp_n(a_i) \|_{\mathbb{R}^d}
	\| \vecp_n(a_{j+1}) - \vecp_n(a_j) \|_{\mathbb{R}^d},
\end{multline*}
where
$\vecp_n$
is an $n$-gon with total length $1$,
and
$F$
is a real-valued function on
$\Omega := \{ (x,y) \in \mathbb{R}^2 \,|\, 0<x \leq y \}$.

\begin{theorem}[Minimizers of $\mathcal{E}^F_n$,
\cite{K18}]
Assume that
$F : \Omega \to \mathbb{R}$
is such that if we set
$g_y(u) = F(\sqrt{u} , y)$
for
$u \in (0, y^2]$
and
$y \in (0,1/2)$,
then
$g_y$ is decreasing and convex.
Moreover,
for
$0<a<b$,
set
$[a]_b := \min \{ a, b-a \}$.
Then,
if
$\vecp_n$
is an {\rm equilateral} polygon,
we have
\[
	\mathcal{E}^F_n(\vecp_n)
	\geq
	\frac 1 n
	\sum_{k=1}^{n-1} F \left(
	\frac{ \sin ([k]_n\pi / n) }{ n \sin(\pi / n) } , \mathscr{D}(\vecp_n(a_k) , \vecp_n(a_0))
	\right)
\]
and the minimizers of
$\mathcal{E}^F_n$
are regular $n$-gons.
\end{theorem}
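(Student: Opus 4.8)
The plan is to use that for an equilateral $n$-gon every edge has length $\mathcal{L}_n/n = 1/n$, so the two edge-length factors in $\mathcal{E}^F_n$ are constant and, after reorganising the double sum by the index gap $k = j-i \pmod n$,
\[
	\mathcal{E}^F_n(\vecp_n)
	=
	\frac{1}{n^2}
	\sum_{k=1}^{n-1}\sum_{i=1}^{n}
	g_{y_k}\!\left( \| \vecp_n(a_{i+k}) - \vecp_n(a_i) \|_{\mathbb{R}^d}^2 \right),
\]
where $y_k := \mathscr{D}(\vecp_n(a_{i+k}) , \vecp_n(a_i)) = [k]_n/n$ depends only on the gap $k$ and on the fact that $\vecp_n$ is equilateral, not on its shape. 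Writing $u_k^{\mathrm{reg}} := (\sin([k]_n\pi/n)/(n\sin(\pi/n)))^2$ for the squared $k$-th chord length of the regular $n$-gon, one checks directly that the right-hand side of the claimed inequality equals $\frac{1}{n}\sum_{k=1}^{n-1} g_{y_k}(u_k^{\mathrm{reg}})$, i.e.\ the value of $\mathcal{E}^F_n$ at the regular $n$-gon. Thus it suffices to minimise each inner sum.

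For each fixed gap $k$ I would apply Jensen's inequality to the convex function $g_{y_k}$, obtaining
\[
	\frac{1}{n}\sum_{i=1}^n g_{y_k}\!\left( \| \vecp_n(a_{i+k})-\vecp_n(a_i)\|_{\mathbb{R}^d}^2\right)
	\geq
	g_{y_k}\!\left( \overline{u}_k \right),
	\qquad
	\overline{u}_k := \frac{1}{n}\sum_{i=1}^n \| \vecp_n(a_{i+k})-\vecp_n(a_i)\|_{\mathbb{R}^d}^2 .
\]
Since $g_{y_k}$ is decreasing, the theorem reduces to the purely geometric bound $\overline{u}_k \leq u_k^{\mathrm{reg}}$, namely that among closed equilateral $n$-gons the averaged squared $k$-th chord is maximised by the regular one. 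To establish this I would pass to the discrete Fourier transform of the edge vectors $e_i := \vecp_n(a_{i+1})-\vecp_n(a_i)$ (each of norm $1/n$). With $\zeta = e^{2\pi\mathrm{i}/n}$ and $\widehat{e}_\omega := \sum_i e_i\zeta^{-i\omega}$, closedness $\sum_i e_i = 0$ gives $\widehat{e}_0 = 0$, while Parseval together with $\|e_i\| = 1/n$ gives $\sum_{\omega=1}^{n-1}\|\widehat{e}_\omega\|^2 = 1$. Since $\vecp_n(a_{i+k})-\vecp_n(a_i) = \sum_{l=0}^{k-1} e_{i+l}$ is a cyclic moving sum, its transform is $\widehat{e}_\omega \sum_{l=0}^{k-1}\zeta^{l\omega}$, and a second application of Parseval yields
\[
	\overline{u}_k
	=
	\frac{1}{n^2}\sum_{\omega=1}^{n-1} h_k(\omega)\,\|\widehat{e}_\omega\|^2,
	\qquad
	h_k(\omega) := \frac{\sin^2(k\omega\pi/n)}{\sin^2(\omega\pi/n)} .
\]

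Because the weights $\|\widehat{e}_\omega\|^2$ are nonnegative and sum to $1$, the bound $\overline{u}_k \leq u_k^{\mathrm{reg}} = \frac{1}{n^2}h_k(1)$ reduces to the trigonometric inequality $\max_{1\leq \omega\leq n-1} h_k(\omega) = h_k(1)$ for all $1\leq k\leq n-1$, and this is the step I expect to be the main obstacle. I would prove it by writing $h_k(\omega) = \psi_k(\omega\pi/n)$ with $\psi_k(t) = (\sin kt/\sin t)^2$ and showing that $\psi_k$ is strictly decreasing on $(0,\pi/k)$: the sign of $\psi_k'$ equals that of $N(t) := k\cos(kt)\sin t - \sin(kt)\cos t$, and $N(0)=0$ together with $N'(t) = (1-k^2)\sin(kt)\sin t \leq 0$ force $N<0$ there. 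This handles all $\omega$ with $\omega\pi/n < \pi/k$; using the symmetries $h_k(\omega)=h_k(n-\omega)$ and $h_{n-k}=h_k$ one may assume $k,\omega\leq n/2$, and the remaining range $\pi/k \leq \omega\pi/n \leq \pi/2$ is dispatched by the envelope estimate $h_k(\omega)\leq 1/\sin^2(\omega\pi/n) \leq 1/\sin^2(\pi/k)$ combined with the elementary inequality $\sin(\pi/n) \leq \sin(k\pi/n)\sin(\pi/k)$.

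Finally, to characterise the minimisers I would track equality. Choosing the gap $k=2$, one has $h_2(\omega) = 4\cos^2(\omega\pi/n)$, which is strictly smaller than $h_2(1)$ for $2\leq \omega \leq n-2$; hence equality in $\overline{u}_2 \leq u_2^{\mathrm{reg}}$ forces $\widehat{e}_\omega = 0$ for every $\omega \notin \{1,n-1\}$. Together with $\widehat{e}_0 = 0$ and the reality of the $e_i$, this gives $e_i = \frac{2}{n}\,\mathrm{Re}(\widehat{e}_1\zeta^i)$, and then the constraint $\|e_i\| = 1/n$ forces the real and imaginary parts of $\widehat{e}_1$ to be orthogonal and of equal length, which is precisely the edge pattern of a regular $n$-gon. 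Thus, assuming strict convexity and monotonicity of $g_y$ (so that the Jensen and monotonicity steps are strict), the only minimisers are the regular $n$-gons up to isometry.
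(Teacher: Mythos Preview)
The paper does not prove this theorem; it is quoted from \cite{K18} without argument, so there is no in-paper proof to compare against. Your approach---reorganise the double sum by the cyclic gap $k$, apply Jensen to the convex $g_{y_k}$, use monotonicity to reduce to the chord inequality $\overline{u}_k\le u_k^{\mathrm{reg}}$, and prove the latter by the discrete Fourier transform of the edge vectors---is the standard route, modelled on the continuous argument of \cite{ACFGH03}, and it is correct. The Parseval computation $\overline{u}_k=n^{-2}\sum_{\omega=1}^{n-1}h_k(\omega)\|\widehat{e}_\omega\|^2$ with $\sum_\omega\|\widehat{e}_\omega\|^2=1$ is right with your normalisation, and your proof that $\psi_k$ is decreasing on $(0,\pi/k)$ via $N'(t)=(1-k^2)\sin(kt)\sin t$ is clean.

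Two small points. First, the ``elementary inequality'' $\sin(\pi/n)\le \sin(k\pi/n)\sin(\pi/k)$ for $2\le k\le n/2$ deserves one line of justification; for instance, $\sin(k\pi/n)\ge (2/\pi)(k\pi/n)$ and $\sin(\pi/k)\ge 2/k$ on the relevant range give $\sin(k\pi/n)\sin(\pi/k)\ge 4/(n\cdot\pi\cdot\ldots)$---more precisely $\ge (4/\pi)\cdot(\pi/n) \cdot (1)$, which dominates $\sin(\pi/n)\le\pi/n$ since $4/\pi>1$. Second, your uniqueness argument via $k=2$ and the Fourier support is fine, but note that it really uses only \emph{strict monotonicity} of $g_{y_2}$ (to force $\overline{u}_2=u_2^{\mathrm{reg}}$, hence $\widehat{e}_\omega=0$ for $\omega\notin\{1,n-1\}$), not strict convexity; the theorem as stated only asserts that regular $n$-gons are minimisers, and the uniqueness claim is made separately in Corollary~\ref{coro:min} for the specific $F(x,y)=(x^{-\alpha}-y^{-\alpha})^p$, where strict monotonicity holds.
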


If we set
$F(x,y) := ( x^{-\alpha} - y^{-\alpha} )^p$,
then
$\mathcal{E}^F_n$
corresponds to
$\mathcal{E}^{\alpha , p}_n$.
Then,
we get the following corollary.

\begin{corollary}\label{coro:min}
Let
$\alpha \in (0, \infty)$
and
$p \in [1,\infty)$.
Then,
minimizers of
$\mathcal{E}^{\alpha , p}_n$
in the set of {\rm equilateral} $n$-gons are regular polygons.
In particular,
a regular polygon with $n$ edges is the only minimizer up to congruent transformations and similar transformations.
\end{corollary}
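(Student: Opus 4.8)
The plan is to obtain Corollary~\ref{coro:min} as an immediate specialization of the preceding theorem on minimizers of $\mathcal{E}^F_n$. Since $\mathcal{E}^{\alpha,p}_n$ is exactly $\mathcal{E}^F_n$ for the choice $F(x,y) := (x^{-\alpha} - y^{-\alpha})^p$, the only thing to do is verify that this $F$ satisfies the theorem's hypothesis, namely that $g_y(u) := F(\sqrt{u},y) = (u^{-\alpha/2} - y^{-\alpha})^p$ is decreasing and convex on $(0,y^2]$ for each $y \in (0,1/2)$. Once this is checked, the theorem says directly that the minimizers of $\mathcal{E}^{\alpha,p}_n$ among equilateral $n$-gons are regular $n$-gons.

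For the verification I would write $h(u) := u^{-\alpha/2} - y^{-\alpha}$, so that $g_y = h^p$. On $(0,y^2]$ one has $u^{-\alpha/2} \geq y^{-\alpha}$ because $\alpha > 0$, hence $h \geq 0$, while $h'(u) = -\tfrac{\alpha}{2} u^{-\alpha/2 - 1} < 0$ and $h''(u) = \tfrac{\alpha}{2}\bigl(\tfrac{\alpha}{2}+1\bigr) u^{-\alpha/2 - 2} > 0$. Then $g_y' = p\,h^{p-1} h' \leq 0$, using $p \geq 1$, $h \geq 0$, and $h' < 0$, which gives monotonicity. Differentiating once more,
\[
	g_y'' = p(p-1)\,h^{p-2}(h')^2 + p\,h^{p-1} h'',
\]
and since $p \geq 1$ both summands are nonnegative wherever $h > 0$, so $g_y$ is convex on $(0,y^2)$ and, by continuity, on $(0,y^2]$. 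Moreover the second summand is strictly positive when $h > 0$, so $g_y$ is in fact \emph{strictly} convex on $(0,y^2)$; I would record this because it is what secures the equality case.

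The uniqueness assertion then follows in two steps. First, the theorem gives that every minimizer is a regular $n$-gon; the strict convexity just noted is exactly the input needed to rule out non-regular equality cases in the underlying Jensen-type estimate. Second, any two regular $n$-gons with the same number of edges coincide up to a rigid motion and a rescaling, so the regular polygon is the unique minimizer modulo congruent and similar transformations, which is the stated conclusion.

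The only delicate point is the behaviour at the endpoint $u = y^2$, where $h(y^2) = 0$ and, for $1 \leq p < 2$, the factor $h^{p-2}$ in $g_y''$ is singular. I expect this to be harmless rather than a genuine obstacle: the singular term is still nonnegative (indeed $g_y'' \to +\infty$ as $u \to y^2{}^-$), so convexity on the open interval is unaffected and extends to the closed interval by continuity of $g_y$. Thus the entire content of the corollary reduces to this elementary convexity computation together with a direct invocation of the already-established theorem.
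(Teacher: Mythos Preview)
Your proposal is correct and matches the paper's approach exactly: the paper derives the corollary simply by noting that $F(x,y) = (x^{-\alpha} - y^{-\alpha})^p$ yields $\mathcal{E}^F_n = \mathcal{E}^{\alpha,p}_n$ and invoking the preceding theorem, leaving the verification of the monotonicity and convexity of $g_y$ implicit. Your explicit computation of $g_y'$ and $g_y''$, together with the observation about strict convexity supporting the uniqueness clause, fills in precisely the details the paper omits.
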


\begin{remark}
There do not exist minimizers of
$\mathcal{E}^{\alpha , p}_n$
in the set of \textit{all} $n$-gons which are not necessarily equilateral.
Indeed,
considering an $(n-1)$-gon as a degenerate $n$-gon,
we have
\[
	0 \leq
	\inf_{n\text{-gon}} \mathcal{E}^{\alpha , p}_n
	\leq
	\inf_{(n-1)\text{-gon}} \mathcal{E}^{\alpha , p}_{n-1}
	\leq
	\cdots
	\leq
	\inf_{3\text{-gon}} \mathcal{E}^{\alpha , p}_3,
\]
and because
$\mathcal{E}^{\alpha , p}_3 (\vecp_3) = 0$
for all $3$-gons
$\vecp_3$,
we obtain
\[
	\inf_{n\text{-gon}} \mathcal{E}^{\alpha , p}_n = 0.
\]
\end{remark}

Next,
we show some examples of numerical experiments.
Let
$\vecg_n$
be a regular $n$-gon.
By the property of
$\Gamma$-convergence and Corollary \ref{coro:min},
we have
\[
	\inf \mathcal{E}^{\alpha , p} = \lim_{n \to \infty} \mathcal{E}^{\alpha , p}_n (\vecg_n),
\]
where
$2 \leq \alpha p < 2p+1$,
and the infimum in the left-hand side is taken over the space of all embedded curves in
$\mathbb{R}^d$.
Therefore,
considering
\cite{ACFGH03},
we can calculate the O'Hara energy of a right circle numerically by increasing the number of vertices $n$ in
$\mathcal{E}^{\alpha , p}_n(\vecg_n)$.
Moreover,
we calculate energies
$\mathcal{L}(\vecg_n)^{\alpha p -2} \mathcal{E}^{\alpha , p}_n(\vecg_n)$,
where
$\mathcal{L}(\vecg_n)^{\alpha p -2}$
is the total length of
$\vecg_n$,
because the factor
$\mathcal{L}(\vecg_n)^{\alpha p -2}$
makes these energies scale invariant.
Note that in
\cite{IN18},
the values of the O'Hara energy
$\mathcal{E}^{\alpha ,1}$
$(2 \leq \alpha < 3)$
of a right circle
$\vecf_0$
are obtained and expressed by
\[
	\mathcal{E}^{\alpha , 1}(\vecf_0)
	=
	\frac 1 { (\alpha-1) \mathcal{L}(\vecf_0)^{\alpha -2} }
	\left\{
	\frac{ (\alpha -2) \pi^{\alpha -1/2} \Gamma((3-\alpha)/2) }{ \Gamma((4-\alpha)/2) } + 2^\alpha
	\right\}.
\]
Here,
we compare
$\mathcal{L}(\vecg_n)^{\alpha - 2} \mathcal{E}^{\alpha , 1}_n(\vecg_n)$
with
$\mathcal{L}(\vecf_0)^{\alpha - 2} \mathcal{E}^{\alpha , 1}(\vecf_0)$,
and we tabulate the result of numerical calculation when
$\alpha = 2,2.1,2.3,2.5,2.7,2.9$
in Table \ref{table:cal}.
\begin{table}[ht]
\begin{center}
\fontsize{7pt}{9pt}\selectfont
\begin{tabular}{l|c||c|c|c|c|c|c}
 & Number of & \multicolumn{6}{|c}{$\alpha$}\\ \cline{3-8}
 & vertices $n$ & $2$ & $2.1$ & $2.3$ & $2.5$ & $2.7$ & $2.9$ \\ \hline \hline
 & $4$ & $1$ & $1.147365$ & $1.500936$ & $1.949372$ & $2.516555$ & $3.232177$ \\ \cline{2-8}
 & $8$ & $2.325253$ & $2.739102$ & $3.780728$ & $5.187945$ & $7.085586$ & $9.640817$ \\ \cline{2-8}
 & $16$ & $3.134412$ & $3.754475$ & $5.372714$ & $7.672833$ & $10.95137$ & $15.64031$ \\ \cline{2-8}
 & $32$ & $3.562332$ & $4.320470$ & $6.363289$ & $9.408493$ & $13.99728$ & $20.99456$ \\ \cline{2-8}
 & $64$ & $3.780229$ & $4.626457$ & $6.969742$ & $10.61781$ & $16.42130$ & $25.87401$ \\ \cline{2-8}
 & $128$ & $3.889916$ & $4.790718$ & $7.341313$ & $11.46626$ & $18.37252$ & $30.38526$ \\ \cline{2-8}
 & $256$ & $3.944913$ & $4.878765$ & $7.569466$ & $12.06415$ & $19.95194$ & $34.58121$ \\ \cline{2-8}
 & $512$ & $3.972446$ & $4.925946$ & $7.709746$ & $12.48634$ & $21.23325$ & $38.49223$ \\ \cline{2-8}
 & $1024$ & $3.986220$ & $4.951228$ & $7.796054$ & $12.78472$ & $22.27356$ & $42.14019$ \\ \cline{2-8}
 & $2048$ & $3.993109$ & $4.964776$ & $7.849171$ & $12.99567$ & $23.11844$ & $45.54354$ \\ \cline{2-8}
D & $4096$ & $3.996555$ & $4.972036$ & $7.881865$ & $13.14482$ & $23.80467$ & $48.71889$ \\ \cline{2-8}
 & $8192$ & $3.998277$ & $4.975926$ & $7.901990$ & $13.25028$ & $24.36205$ & $51.68157$ \\ \cline{2-8}
 & $16384$ & $3.999139$ & $4.978011$ & $7.914378$ & $13.32485$ & $24.81478$ & $54.44584$ \\ \cline{2-8}
 & $32768$ & $3.999569$ & $4.979129$ & $7.922004$ & $13.37758$ & $25.18251$ & $57.02499$ \\ \cline{2-8}
 & $65536$ & $3.999785$ & $4.979727$ & $7.926698$ & $13.41487$ & $25.48120$ & $59.43143$ \\ \cline{2-8}
 & $131072$ & $3.999892$ & $4.980048$ & $7.929588$ & $13.44124$ & $25.72381$ & $61.67671$ \\ \cline{2-8}
 & $262144$ & $3.999946$ & $4.980220$ & $7.931366$ & $13.45988$ & $25.92087$ & $63.77161$ \\ \cline{2-8}
 & $524288$ & $3.999973$ & $4.980312$ & $7.932461$ & $13.47306$ & $26.08094$ & $65.72639$ \\ \cline{2-8}
 & $1048576$ & $3.999987$ & $4.980362$ & $7.933135$ & $13.48238$ & $26.21093$ & $67.55013$ \\ \cline{2-8}
 & $2097152$ & $4.000004$ & $4.980401$ & $7.933568$ & $13.48900$ & $26.31651$ & $69.25143$ \\ \cline{2-8}
 & $4194304$ & $3.999997$ & $4.980402$ & $7.933807$ & $13.49362$ & $26.40257$ & $70.84417$ \\ \hline \hline
\multicolumn{2}{c||}{ Analytic values } & $4$ & $4.980419$ & $7.934215$ & $13.50489$ & $26.77342$ & $92.95965$ \\ \hline \hline
\multicolumn{2}{c||}{ D$/$A } & $0.999999$ & $0.999997$ & $0.999949$ & $0.999166$ & $0.986148$ & $0.762096$
\end{tabular}
\normalsize
\caption{Numerical calculation of
$\mathcal{L}(\vecf_0)^{\alpha -2}\mathcal{E}^{\alpha , 1}(\vecf_0)$
when
$2 \leq \alpha < 3$
(D:
Values of discretization,
D$/$A:
Divisions of value of discretization when
$n=4194304$
by analytic value)}
\label{table:cal}
\end{center}
\end{table}
It follows from Theorem \ref{thm:convergence} that the convergence becomes slow,
when
$\alpha$
approaches to $3$.
We can see this fact from Table \ref{table:cal}.
Moreover,
we investigate the behavior of
\[
	e_\alpha (n) :=
	n^{\alpha -2}
	\left|
	\mathcal{L}(\vecf_0)^{\alpha - 2} \mathcal{E}^{\alpha , 1}(\vecf_0)
	-
	\mathcal{L}(\vecg_n)^{\alpha - 2} \mathcal{E}^{\alpha , 1}_n(\vecg_n)
	\right|
\]
when number of vertices $n$ increases,
where
$2 \leq \alpha < 3$.
We expect that
$e_\alpha (n)$
converges to a constant if the order of convergence in Theorem \ref{thm:convergence} is optimal,
and we can see that this conjecture seems to be true in Figure \ref{figure:error}.

Now,
we show some interesting examples of
$\mathcal{E}_n^{\alpha , p}(\vecg_n)$
when the number of vertices $n$ is not so large.
As we can see in Figure \ref{figure:2--30},
$\mathcal{L}(\vecg_{2^k})^{58}\mathcal{E}^{2,30}_{2^k}(\vecg_{2^k})$
for
$k \in \mathbb{N}$
takes the maximum value at
$k=4$,
and the larger the value that
$p$
takes,
the larger the maximum value is.
Therefore,
we show a figure of
$\mathcal{E}^{2,30}_n (\vecg_n)$
for
$n \geq 100$
in Figure \ref{figure:2--30low}.
Note that
$\mathcal{L}(\vecg_{2\ell +1})^{58} \mathcal{E}^{2,30}_{2\ell +1}(\vecg_{2\ell +1})$
for
$\ell \geq 2$
is monotonically increasing.
However,
$\mathcal{L}(\vecg_{2\ell})^{58} \mathcal{E}^{2,30}_{2\ell}(\vecg_{2\ell})$
for
$\ell \geq 2$
takes the maximum at
$\ell =10$
($n=20$)
and is decreasing to the value of
$\mathcal{L}(\vecf_0)^{58} \mathcal{E}^{2,30}(\vecf_0)$
when
$\ell \geq 10$.
The cause of this phenomena we think is as follows:
when
$n$
is much less than $20$,
the discrete energy is a summation which consists of a small number of terms with large value.
On the other hand,
when
$n$
is much larger than $20$,
the discrete energy is a summation which consists of a large number of terms with small value.
If
$n$
is around $20$,
then the number of terms and the size of each term might make the energy large.
This phenomena will be remarkable when
$p$
becomes large.
To the auther,
the reason seems to be as follows:
when
$p$
is large,
the difference of the size of the terms becomes bigger.
Moreover,
we observe from Figure \ref{figure:2--30low} that the energy with even $n$ is larger than that with odd $n$.
The energy density becomes large when the difference between the intrinsic distance and the extrinsic distance is large.
The difference maximizes when two points are antipodal,
which is a situation that occurs only when $n$ is even.

\newpage
\begin{figure}[ht]
\begin{tabular}{ccc}
\begin{minipage}[t]{0.3\hsize}
\begin{center}
\includegraphics[height=2.5cm,width=3.9cm]{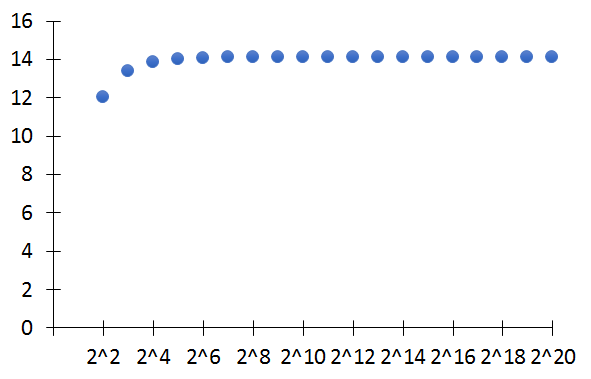}
\end{center}
\begin{center}
$\alpha = 2$
\end{center}
\end{minipage} &
\begin{minipage}[t]{0.3\hsize}
\begin{center}
\includegraphics[height=2.5cm,width=3.9cm]{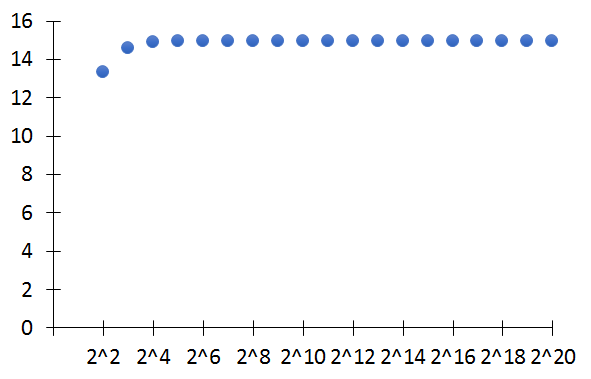}
\end{center}
\begin{center}
$\alpha = 2.1$
\end{center}
\end{minipage} &
\begin{minipage}[t]{0.3\hsize}
\begin{center}
\includegraphics[height=2.5cm,width=3.9cm]{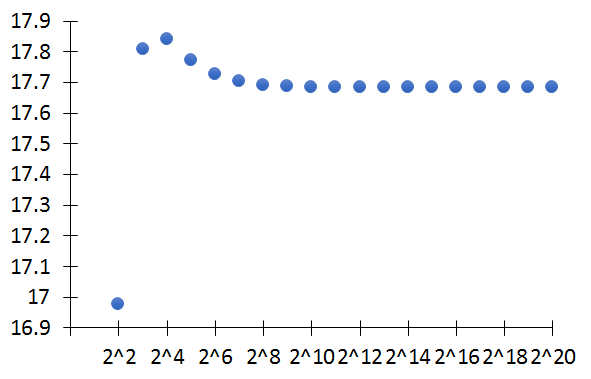}
\end{center}
\begin{center}
$\alpha = 2.3$
\end{center}
\end{minipage}
\end{tabular}
\begin{tabular}{ccc}
\begin{minipage}[t]{0.3\hsize}
\begin{center}
\includegraphics[height=2.5cm,width=3.9cm]{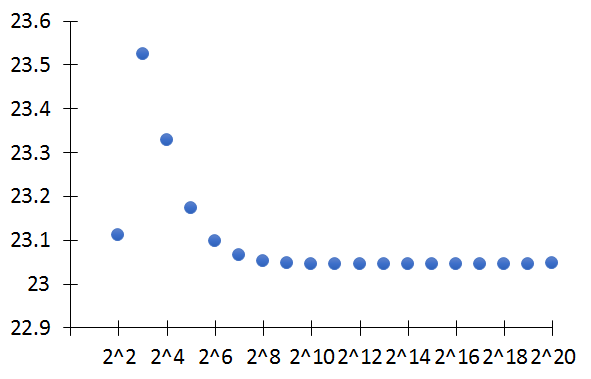}
\end{center}
\begin{center}
$\alpha = 2.5$
\end{center}
\end{minipage} &
\begin{minipage}[t]{0.3\hsize}
\begin{center}
\includegraphics[height=2.5cm,width=3.9cm]{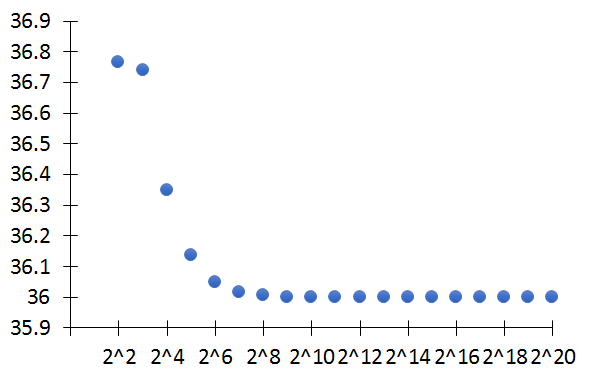}
\end{center}
\begin{center}
$\alpha = 2.7$
\end{center}
\end{minipage} &
\begin{minipage}[t]{0.3\hsize}
\begin{center}
\includegraphics[height=2.5cm,width=3.9cm]{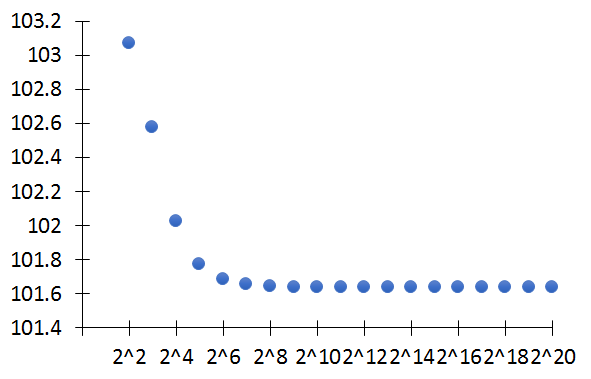}
\end{center}
\begin{center}
$\alpha = 2.9$
\end{center}
\end{minipage}
\end{tabular}
\caption{Graphs of $e_\alpha (n)$
(The vertical and horizontal axes show values of
$e_\alpha(n)$
and numbers of vertices
$n=2^k$
($k=2,3, \cdots , 20$),
respectively)}\label{figure:error}
\vspace{1cm}
\begin{tabular}{cc}
\begin{minipage}[t]{0.45\hsize}
\begin{center}
\includegraphics[height=3.5cm,width=5.5cm]{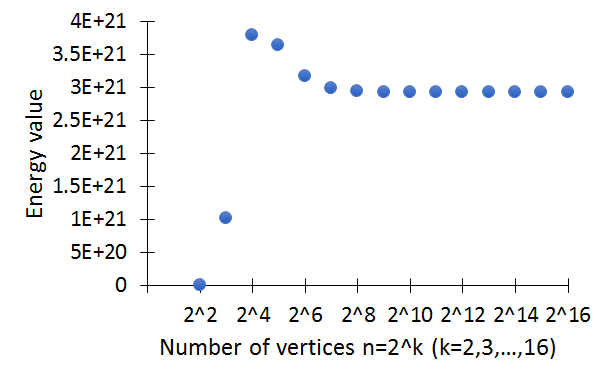}
\end{center}
\caption{Values of
$\mathcal{E}^{2,30}_{2^k}(\vecg_{2^k})$}
\label{figure:2--30}
\end{minipage} &
\begin{minipage}[t]{0.45\hsize}
\begin{center}
\includegraphics[height=3.5cm,width=6cm]{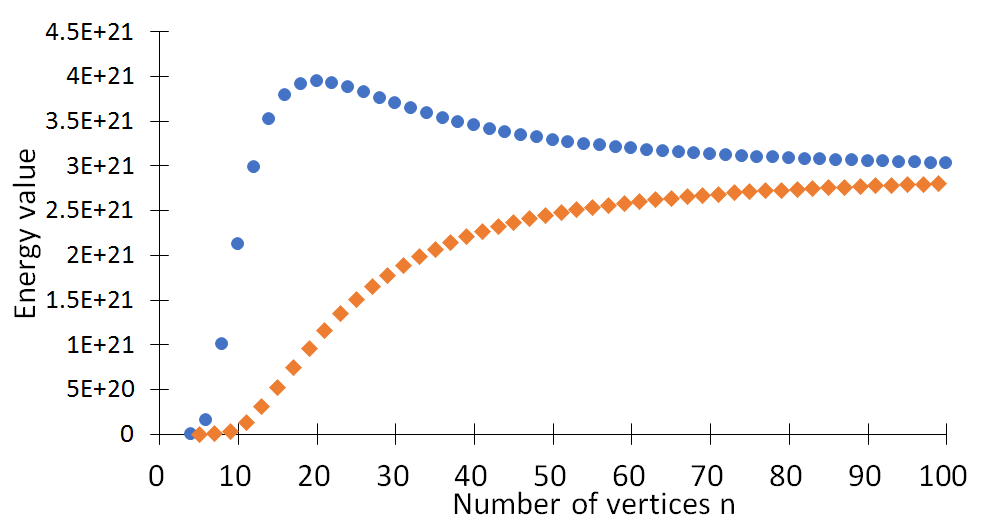}
\end{center}
\caption{Values of
$\mathcal{E}^{2,30}_n(\vecg_n)$
when
$n \leq 100$
(Blue, round points and orange, diamond points show values when $n$ is even and odd,
respectively)}
\label{figure:2--30low}
\end{minipage}
\end{tabular}
\end{figure}
\newpage

%
%

\section{Conclusions and future work}
In Section \ref{gene},
we considered the generalized O'Hara energy
$\mathcal{E}^{\Phi , p}$
and characterized the finiteness of these energies by using the generalized Sobolev-Slobodeckii space
$W^{1+\Psi , 2p}$.
However,
several problems concerning
$\mathcal{E}^{\Phi , p}$
remain open,
e.g.,\ conditions which
$\mathcal{E}^{\Phi , p}$
is the knot energy
(with regard to the definition, see
\cite{O92}),
and the existence of minimizers of
$\mathcal{E}^{\Phi , p}$
in a given knot type.
In Section \ref{dis},
we discussed a discretization defined in
\cite{K18}
of not only the M\"{o}bius energy but also the O'Hara energy
and numerically calculated the energy values of a right circle.
Several researchers have considered numerical calculations of
$\mathcal{E}^{\alpha , 1}$
of not only a circle but also various knots.
However,
numerical calculation of
$\mathcal{E}^{\alpha , p}$
($p>1$),
except a right circle,
is yet to be carried out;
this will be addressed om forthcoming work of the author.

\paragraph{Acknowledgments}
The author is grateful to Professor Takeyuki Nagasawa for his direction and many useful advices and remarks. Moreover,
the author would like to thank Professor Richard Neal Bez for English language editing and mathematical comments.
%
%

\end{document}